\newcommand{\excise}[1]{}%{$\star$\textsc{#1}$\star$}
\newtheorem{thm}{Theorem}[section]
\newtheorem{lemma}[thm]{Lemma}
\newtheorem{cor}[thm]{Corollary}
\newtheorem{prop}[thm]{Proposition}
\theoremstyle{definition}
\newtheorem{example}[thm]{Example}
\newtheorem{remark}[thm]{Remark}
\newtheorem{defn}[thm]{Definition}
\newtheorem{notation}[thm]{Notation}
\numberwithin{equation}{section}
\newcommand{\ring}[1]{\ensuremath{\mathbb{#1}}}
\renewcommand\>{\rangle}
\newcommand\<{\langle}
\newcommand\RR{\ring{R}}
\newcommand\ZZ{\ring{Z}}
\newcommand\iso{\cong}
\def\ol#1{{\overline {#1}}}
\DeclareMathOperator\Ap{Ap} % Apery set
\begin{document}%%%%%%%%%%%%%%%%%%%%%%%%%%%%%%%%%%%%%%%%%%%%%%%%%%%%%%%%
%%%%%%%%%%%%%%%%%%%%%%%%%%%%%%%%%%%%%%%%%%%%%%%%%%%%%%%%%%%%%%%%%%%%%%%%

\mbox{}
%\vspace{-2ex}%-1.1743pt}
\title[Numerical semigroups, polyhedra, and posets II]{Numerical semigroups, polyhedra, and posets II:\ \\ locating certain families of semigroups}
% the geometry of arithmetical and \\ telescopic numerical semigroups

\author[J.~Autry]{Jackson Autry}
\address{Mathematics and Statistics Department\\San Diego State University\\San Diego, CA 92182}
\email{jautry@sdsu.edu}

\author[A.~Ezell]{Abigail Ezell}
\address{Mathematics and Computer Science Department\\Colorado College\\Colorado Springs, CO 80903}
\email{a\_ezell@coloradocollege.edu}

\author[T.~Gomes]{Tara Gomes}
\address{Mathematics and Statistics Department\\San Diego State University\\San Diego, CA 92182}
\email{gomes.tara@gmail.com}

\author[C.~O'Neill]{Christopher O'Neill}
\address{Mathematics and Statistics Department\\San Diego State University\\San Diego, CA 92182}
\email{cdoneill@sdsu.edu}

\author[C.~Preuss]{Christopher Preuss}
\address{Mathematics Department\\University of Washington Tacoma\\Tacoma, WA 98402}
\email{preussc@uw.edu}

\author[T.~Saluja]{Tarang Saluja}
\address{Mathematics and Statistics Department\\Swarthmore College\\Swarthmore, PA 19081}
\email{tsaluja1@swarthmore.edu}

\author[E.~Davila]{Eduardo Torres Davila}
\address{Mathematics and Statistics Department\\San Diego State University\\San Diego, CA 92182}
\email{etdavila10@gmail.com}

\date{\today}

\begin{abstract}
Several recent papers have examined a rational polyhedron $P_m$ whose integer points are in bijection with the numerical semigroups (cofinite, additively closed subsets of the non-negative integers) containing $m$.  A combinatorial description of the faces of $P_m$ was recently introduced, one that can be obtained from the divisibility posets of the numerical semigroups a given face contains.  
In this paper, we study the faces of $P_m$ containing arithmetical numerical semigroups and those containing certain glued numerical semigroups, as an initial step towards better understanding the full face structure of~$P_m$.  In most cases, such faces only contain semigroups from these families, yielding a tight connection to the geometry of $P_m$.  
\end{abstract}

\maketitle

% \setcounter{tocdepth}{1}
% \tableofcontents

%%%%%%%%%%%%%%%%%%%%%%%%%%%%%%%%%%%%%%%%%%%%%%%%%%%%%%%%%%%%%%%%%%%%%%%%%
\section{Introduction}%%%%%%%%%%%%%%%%%%%%%%%%%%%%%%%%%%%%%%%%%%%%%%%%%%%
\label{sec:intro}%%%%%%%%%%%%%%%%%%%%%%%%%%%%%%%%%%%%%%%%%%%%%%%%%%%%%%%%
%raggedbottom%%%%%%%%%%%%%%%%%%%%%%%%%%%%%%%%%%%%%%%%%%%%%%%%%%%%%%%%%%%%

A \emph{numerical semigroup} is a cofinite subset $S \subseteq \ZZ_{\ge 0}$ of the non-negative integers that is closed under addition.  Numerical semigroups are often specified using a set of generators $n_1 < \cdots < n_k$, i.e., 
$$S = \<n_1, \ldots, n_k\> = \{a_1n_1 + \cdots + a_kn_k : a_i \in \ZZ_{\ge 0}\}.$$
The \emph{Ap\'ery set} of $m \in S$ is the set 
$$\Ap(S;m) = \{n \in S : n - m \notin S\}$$
of the minimal elements of $S$ within each equivalence class modulo $m$.  Since $S$ is cofinite, we~are guaranteed $|\!\Ap(S;m)| = m$, and that $\Ap(S;m)$ contains exactly one element in each equivalence class modulo $m$.  The elements of $\Ap(S;m)$ are partially ordered by divisibility, that is, $a \preceq a'$ whenever $a' - a \in S$ (or, equivalently, whenever $a' - a \in \Ap(S;m)$); we call this the \emph{Ap\'ery poset} of $m$ in $S$.   

A family of rational polyhedra whose integer points are in bijection with certain numerical semigroups, first introduced by Kunz~\cite{kunz} and independently in~\cite{kunzcoords}, has received a flurry of recent attention~\cite{alhajjarkunz,wilfmultiplicity,oversemigroupcone,kaplanwilfconj,kunzfaces1,kunznew,kunzcoords}.  
More specifically, given $m \ge 2$, the \emph{Kunz polyhedron} $P_m$ is a pointed rational cone, translated from the origin, whose integer points are in bijection with the numerical semigroups containing $m$ (we defer precise definitions to Section~\ref{sec:polyhedra}).  One of the primary goals of studying these polyhedra is to utilize tools from lattice point geometry (e.g., Ehrhart's theorem) to approach some long-standing enumerative questions involving numerical semigroups~\cite{wilfsurvey,kaplanwilfconj}.  

Recent developments have produced a combinatorial description of the faces of $P_m$.  Given a numerical semigroup $S$ containing $m$, the \emph{Kunz poset} is the partially ordered set with ground set $\ZZ_m$ obtained by replacing each element of the Ap\'ery poset $\Ap(S;m)$ with its equivalence class in $\ZZ_m$.  In~\cite{wilfmultiplicity}, it was shown that two numerical semigroups lie in the interior of the same face of~$P_m$ if and only if they have identical Kunz posets, thereby providing a natural combinatorial object that indexes the faces of~$P_m$ containing numerical semigroups.  This combinatorial description was extended in~\cite{kunzfaces1} to include every face of $P_m$, even those that do not contain any numerical semigroups.  

When studying questions that are difficult to answer for general numerical semigroups, it is common to restrict to certain families with some additional structure.  In~this paper, we examine two such families.  The first (one of the most common in the literature) are \emph{arithmetical} numerical semigroups, whose minimal generating sets are arithmetic sequences.  Thanks to a particularly powerful membership criterion, these semigroups admit closed forms for many quantities that are difficult to obtain in general~\cite{setoflengthsets,omidali,numerical}.  The second family is comprised of numerical semigroups obtained by scaling every element of a given semigroup $S$ by some common factor $\beta$ and then adding one new generator $\alpha$ to obtain $\<\alpha\> + \beta S$ (this process is known as \emph{gluing}).  The~result is a broad class of semigroups (which we call \emph{monoscopic} numerical semigroups (Definition~\ref{d:monoscopic})) that includes several other families of independent interest, such as supersymmetric~\cite{supersymmetric} and telescopic~\cite{telescopic} numerical semigroups, as well as numerical semigroups on compound sequences~\cite{compseqs}.  

The combinatorial interpretation of the faces of $P_m$ in terms of posets is still young, and many basic questions are still unanswered.  
% For instance, can one obtain the dimension of a face $P_m$ directly from its corresponding poset?  Which posets on $\ZZ_m$ corresond to a face of $P_m$?  Can one characterize the extremal rays of $P_m$?  
% This final question is of particular interest as it is the first step in obtaining a triangulation of $P_m$, a necessary 
The goal of this paper is to describe geometrically the faces of~$P_m$ containing numerical semigroups from the families described above, as an initial step towards better understanding the full face structure of~$P_m$.  To this end, we give a formula for the dimension of every such face, and in most cases characterize their extremal rays (both of which are still not well understood in general for~$P_m$).  Our~results yield two particularly notable geometric insights.  

\begin{itemize}
\item 
We provide a collection of combinatorial embeddings of the form $P_m \hookrightarrow P_{\beta m}$, which we call \emph{monoscopic embeddings}, whose images contain precisely those semigroups obtained from monoscopic gluings of semigroups in $P_m$ with scaling factor~$\beta$.  This provides a complete characterization of the faces containing monoscopic numerical semigroups, as well as all faces they contain.  One interpretation of this construction is that monoscopic gluing of numerical semigroups can be realized as a geometric operation on Kunz polyhedra.  

\item 
The posets associated to low-dimensional faces, such as rays, possess the most relations, making them more difficult to classify in general.  Moreover, many rays do not contain numerical semigroups.  When describing a ray~$\vec r$ of a face $F$ containing arithmetical numerical semigroups, we do so by examining the effect adding $\vec r$ to each integer point in $F$ has on the minimal generators of the corresponding semigroup.  
% (note that adding $\vec r$ is a poset-preserving operation).  

\end{itemize}

In addition to providing a glimpse of the face structure of $P_m$, our results have several consequences for numerical semigroups outside the realm of geometry.  

\begin{itemize}
\item 
The elements in the Ap\'ery sets of arithmetical and monoscopic numerical semigroups are well understood (indeed, this is one of the reasons these families are considered especially ``nice'').  We extend both of these classical results to include a description of the divisibility poset structure of the Ap\'ery set, each of which has an elegant combinatorial structure; see Figure~\ref{f:previews} for examples.  

\begin{figure}[t!]
\begin{center}
\begin{subfigure}[t]{0.45\textwidth}
\begin{center}
\includegraphics[height=2.5in]{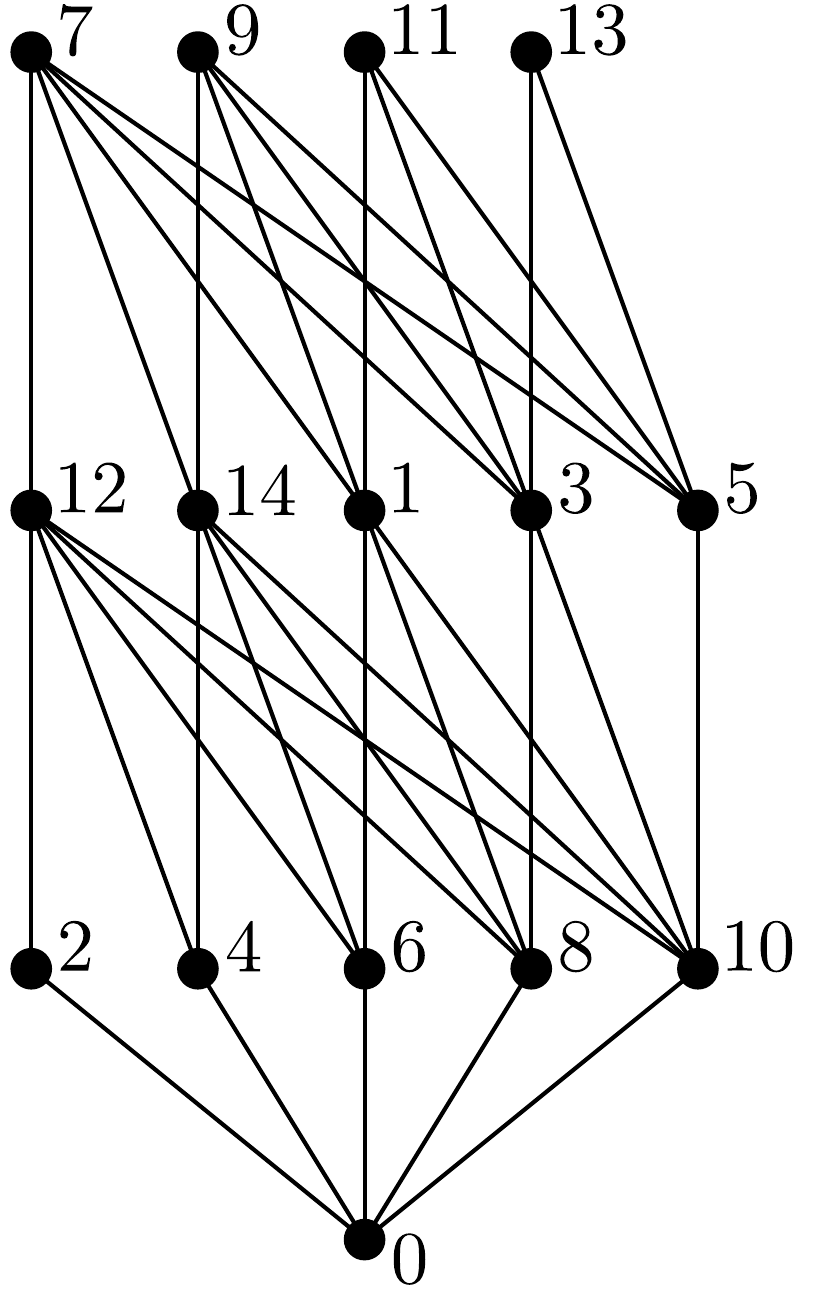}
\end{center}
\caption{}
\label{f:previewarithmetic}
\end{subfigure}
\hspace{0.02\textwidth}
\begin{subfigure}[t]{0.45\textwidth}
\begin{center}
\includegraphics[height=2.5in]{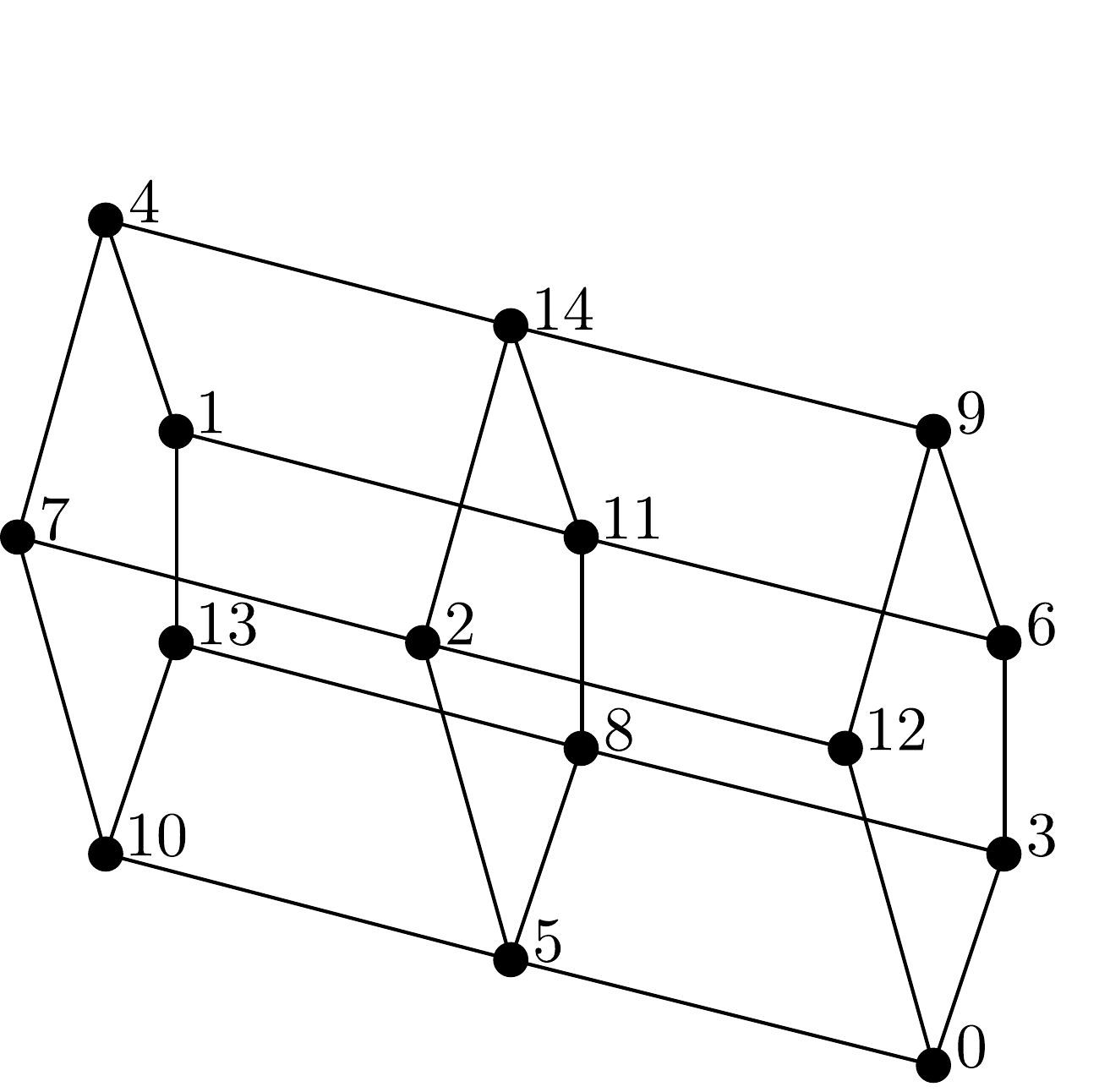}
\end{center}
\caption{}
\label{f:previewscopic}
\end{subfigure}
\end{center}
\caption{Ap\'ery posets of two numerical semigroups.  The first, given by $S = \<15,17,19,21,23\>$ (left), is arithmetical, and the second, given by $S' = \<15, 18, 20, 27\>$ (right), is monoscopic.}
\label{f:previews}
\end{figure}

\item 
In most cases, the membership criterion for generalized arithmetical numerical semigroups can be extended to all semigroups lying on their same face.  This gives rise to a new family of semigroups, which we call \emph{extra-generalized arithmetical numerical semigroups}, possessing most of the desirable properties of arithmetical numerical semigroups.  We develop this new family, independent of the geometry of $P_m$, including a membership criterion (Proposition~\ref{p:arithmembercrit}) and a formula for their Frobenius number (Corollary~\ref{c:extraarithfrob}).  
% Extra-generalized arithmetical numerical semigroups are, in some sense, the largest possible family with this membership criterion.  

\end{itemize}

The paper is organized as follows.  After reviewing the necessary terminology in Section~\ref{sec:polyhedra}, we introduce extra-generalized arithmetical numerical semigroups in Section~\ref{sec:arithposets}, providing a membership criterion (Proposition~\ref{p:arithmembercrit}), a characterization of their Ap\'ery posets (Theorem~\ref{t:arithposet}), and a formula for their Frobenius numbers (Corollary~\ref{c:extraarithfrob}).  We then examine the faces of $P_m$ containing extra-generalized arithmetical numerical semigroups in Section~\ref{sec:arithfaces}, giving a formula for their dimension (Theorem~\ref{t:arithfacedim}) and, in most cases, their extremal rays (Theorem~\ref{t:arithrays}).  In the final two sections of the paper, we turn attention to monoscopic numerical semigroups, characterizing their Ap\'ery poset structure (Theorem~\ref{t:monoscopicposet}) and the complete structure of the faces containing them (Theorems~\ref{t:augmonoscopicfaces} and~\ref{t:monoscopicfaces}) via monoscopic embeddings (Definition~\ref{d:monoscopicembedding}).

%%%%%%%%%%%%%%%%%%%%%%%%%%%%%%%%%%%%%%%%%%%%%%%%%%%%%%%%%%%%%%%%%%%%%%%%%
\section{The group cone and its faces}%%%%%%%%%%%%%%%%%%%%%%%%%%%%%%%%%%%
\label{sec:polyhedra}%%%%%%%%%%%%%%%%%%%%%%%%%%%%%%%%%%%%%%%%%%%%%%%%%%%%
%raggedbottom%%%%%%%%%%%%%%%%%%%%%%%%%%%%%%%%%%%%%%%%%%%%%%%%%%%%%%%%%%%%

After recalling basic definitions from polyhedral geometry (see~\cite{ziegler} for a thorough introduction), we define the Kunz polyhedron $P_m$ and a related polyhedron 
% called the group cone
from~\cite{kunzfaces1}.  

A \emph{rational polyhedron} $P \subset \RR^d$ is the set of solutions to a finite list of linear inequalities with rational coefficients, that is, 
$$P = \{x \in \RR^d : Ax \le b\}$$
for some matrix $A$ and vector $b$.  
If none of the inequalities can be omitted without altering $P$, we call this list the \emph{$H$-description} or \emph{facet description} of $P$ (such a list of inequalities is unique up to reordering and scaling by positive constants).  The~inequalities appearing in the H-description of $P$ are called \emph{facet inequalities} of~$P$.  

Given a facet inequality $a_1x_1 + \cdots + a_dx_d \le b$ of $P$, the intersection of $P$ with the equation $a_1x_1 + \cdots + a_dx_d = b$ is called a \emph{facet} of $P$.  
A \emph{face} $F$ of $P$ is a subset of $P$ equal to the intersection of some collection of facets of $P$.  The set of facets containing $F$ is called the \emph{H-description} or \emph{facet description} of $F$.  The \emph{dimension} of a face $F$ is the dimension $\dim(F)$ of the affine linear span of $F$.  The \emph{relative interior} of a face $F$ is the set of points in $F$ that do not also lie in a face of dimension strictly smaller than $F$ (or, equivalently, do not lie in a proper face of $F$).  We say $F$ is a \emph{vertex} if $\dim(F) = 0$, and \emph{edge} if $\dim(F) = 1$ and $F$ is bounded, a \emph{ray} if $\dim(F) = 1$ and $F$ is unbounded, and a \emph{ridge} if $\dim(F) = d - 2$.  

If there is a unique point $v$ satisfying every inequality in the H-description of $P$ with equality, then we call $P$ a \emph{cone} with vertex $v$.  If, additionally, $b = 0$ above, we call $P$ a \emph{pointed cone}.  Separately, we say $P$ is a \emph{polytope} if $P$ is bounded.  If $P$ is a pointed cone, then any face $F$ equals the non-negative span of the rays of $P$ it contains, and if~$P$ is a polytope, then any face $F$ equals the convex hull of the set of vertices of $P$ it contains; in each case, we call this the \emph{V-description} of $F$.  

A \emph{partially ordered set} (or \emph{poset}) is a set $Q$ equipped with a \emph{partial order} $\preceq$ that is reflexive, antisymmetric, and transitive.  We say $q$ \emph{covers} $q'$ if $q' \prec q$ and and there is no intermediate element $q''$ with $q' \prec q'' \prec q$.  If $(Q, \preceq)$ has a unique minimal element $0 \in Q$, the \emph{atoms} of $Q$ are the elements that cover $0$.  The set of faces of a polyhedron $P$ forms a poset under containment that is a \emph{lattice} (i.e., every element has a unique greatest common divisor and least common multiple) and is \emph{graded}, where the height function is given by dimension.  If $P$ is a cone, then every face of $P$ equals the sum of some collection of extremal rays and the intersection of some collection of facets, meaning the face lattice of $P$ is both \emph{atomic} and \emph{coatomic}.  

\begin{defn}\label{d:kunzcoords}
Fix $m \in \ZZ_{\ge 2}$, and a numerical semigroup $S$ containing $m$.  Write
$$\Ap(S;m) = \{0, a_1, \ldots, a_{m-1}\},$$
where $a_i = mz_i + i$ for each $i = 1, \ldots, m-1$.  We refer to the tuples $(a_1, \ldots, a_{m-1})$ and $(z_1, \ldots, z_{m-1})$ as the \emph{Ap\'ery tuple}/\emph{Ap\'ery coordinates} and the \emph{Kunz tuple}/\emph{Kunz coordinates} of $S$, respectively.  
\end{defn}

\begin{defn}\label{d:kunzandgroupcone}
Fix a finite Abelian group $G$, and let $m = |G|$.  The \emph{group cone} $\mathcal C(G) \subset \RR^{m-1}$ is the pointed cone with facet inequalities
$$\begin{array}{r@{}c@{}l@{\qquad}l}
x_i + x_j &{}\ge{}& x_{i+j} & \text{for } i, j \in G \setminus \{0\} \text{ with } i + j \ne 0,
\end{array}$$
where the coordinates of $\RR^{m-1}$ are indexed by $G \setminus \{0\}$.  
Additionally, for each integer $m \ge 2$, let $P_m$ denote the translation of $\mathcal C(\ZZ_m)$ with vertex $(-\tfrac{1}{m}, \ldots, -\tfrac{m-1}{m})$, whose facets are given by
$$\begin{array}{r@{}c@{}l@{\qquad}l}
x_i + x_j &{}\ge{}& x_{i+j} & \text{for } 1 \le i \le j \le m - 1 \text{ with } i + j < m, \text{ and} \\
x_i + x_j + 1 &{}\ge{}& x_{i+j-m} & \text{for } 1 \le i \le j \le m - 1 \text{ with } i + j > m.  
\end{array}$$
We refer to $P_m$ as the \emph{Kunz polyhedron}.  
\end{defn}

Parts~(a) and~(b) of the following theorem appear in \cite{kunz} and \cite{kunzfaces1}, respectively.  

\begin{thm}\label{t:kunzlatticepts}
Fix an integer $m \ge 2$.  
\begin{enumerate}[(a)]
\item 
The set of all Kunz tuples of numerical semigroups containing $m$ coincides with the set of integer points in $P_m$.  

\item 
The set of all Ap\'ery tuples of numerical semigroups containing $m$ coincides with the set of integer points $(a_1, \ldots, a_{m-1})$ in $\mathcal C(\ZZ_m)$ with $a_i \equiv i \bmod m$ for every $i$.  

\end{enumerate}
\end{thm}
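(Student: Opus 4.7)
The plan is to prove parts~(a) and~(b) together by observing that the affine bijection $(z_1, \ldots, z_{m-1}) \mapsto (mz_1 + 1, \ldots, mz_{m-1} + (m-1))$ carries the facet description of $P_m$ from Definition~\ref{d:kunzandgroupcone} onto that of $\mathcal{C}(\ZZ_m)$: substituting $x_i = mz_i + i$ into $x_i + x_j \ge x_{(i+j) \bmod m}$ recovers $z_i + z_j \ge z_{i+j}$ when $i + j < m$ and $z_i + z_j + 1 \ge z_{i+j-m}$ when $i + j > m$. Under this bijection the integer points of $P_m$ correspond exactly to the integer points $(a_1, \ldots, a_{m-1})$ of $\mathcal{C}(\ZZ_m)$ with $a_i \equiv i \pmod m$, so it suffices to prove~(b) and~(a) follows.

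For the forward direction of~(b), fix a numerical semigroup $S$ containing $m$ and write $\Ap(S;m) = \{0, a_1, \ldots, a_{m-1}\}$ with $a_i \equiv i \pmod m$. For any $i, j \in \ZZ_m \setminus \{0\}$ with $i + j \not\equiv 0 \pmod m$, the sum $a_i + a_j$ lies in $S$ and has residue $(i+j) \bmod m$, so by minimality of Ap\'ery elements $a_i + a_j \ge a_{(i+j) \bmod m}$, which is exactly the defining inequality of $\mathcal{C}(\ZZ_m)$.

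For the reverse direction, start with an integer point $(a_1, \ldots, a_{m-1}) \in \mathcal{C}(\ZZ_m)$ satisfying $a_i \equiv i \pmod m$, set $a_0 = 0$, and define $S = \{a_i + km : 0 \le i \le m - 1,\ k \ge 0\}$ as a union of arithmetic progressions with common difference $m$. Three things need verification: (i)~$a_i \ge 0$ for every $i$, so that $S \subseteq \ZZ_{\ge 0}$; (ii)~$S$ is closed under addition and hence a numerical semigroup containing $m$ (cofiniteness being immediate from the definition of $S$); and (iii)~the $a_i$ are precisely the Ap\'ery elements of $m$ in $S$. For~(ii), given $a_i + km$ and $a_j + \ell m$ in $S$, the cone inequality $a_i + a_j \ge a_{(i+j) \bmod m}$ together with $a_i + a_j \equiv i + j \pmod m$ forces $a_i + a_j = a_{(i+j) \bmod m} + rm$ for some $r \ge 0$, placing the sum in $S$; the case $i + j \equiv 0 \pmod m$ reduces to $a_i + a_j \in m\ZZ_{\ge 0}$, which follows once~(i) holds. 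For~(iii), since the $a_i$ have distinct residues modulo $m$, each is automatically the minimum element of $S$ in its residue class.

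The main obstacle is step~(i), because the facet inequalities contain no direct bound of the form $x_i + x_{m-i} \ge 0$ and therefore do not forbid negative entries a priori. The strategy is to iterate the cone relation along the cyclic subgroup $\langle i \rangle \le \ZZ_m$: induction gives $k a_i \ge a_{ki \bmod m}$ whenever $\ell i \not\equiv 0 \pmod m$ for $1 \le \ell \le k$, and combining this with the additional inequality $a_{(d-1)i \bmod m} + a_{2i \bmod m} \ge a_i$ (where $d$ is the order of $i$ in $\ZZ_m$) collapses to $(d+1) a_i \ge a_i$, forcing $a_i \ge 0$ whenever $d \ge 3$. The borderline case $d = 2$, i.e.~$i = m/2$ for even $m$, is handled separately by combining $a_1 + a_{m/2} \ge a_{m/2 + 1}$ with $a_{m/2} + a_{m/2 + 1} \ge a_1$, which together give $-a_{m/2} \le a_{m/2}$. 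Once~(i) is secured, the rest of the reverse direction is routine.
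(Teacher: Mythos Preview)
The paper does not supply its own proof of this theorem: the result is quoted from the literature, with part~(a) attributed to Kunz~\cite{kunz} and part~(b) to~\cite{kunzfaces1}.  There is therefore no in-paper argument to compare against.

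Your argument is correct and self-contained.  The reduction of~(a) to~(b) via the affine change of coordinates $a_i = mz_i + i$ is exactly how the two polyhedra are related in Definition~\ref{d:kunzandgroupcone}, and your forward direction for~(b) is the standard one.  The substantive content is the reverse direction, and in particular step~(i), where the facet inequalities alone must force $a_i \ge 0$.  Iterating the cone inequality along the cyclic subgroup generated by $i$ to reach $(d+1)a_i \ge a_i$ when $d \ge 3$, together with the separate pairing $a_1 + a_{m/2} \ge a_{m/2+1}$ and $a_{m/2} + a_{m/2+1} \ge a_1$ for the unique order-$2$ element, is a clean way to extract positivity from inequalities that individually say nothing about signs.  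Once~(i) is in hand, your verifications of~(ii) and~(iii) are routine and correct.

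One small caveat worth recording: for $m = 2$ the cone $\mathcal C(\ZZ_2)$ has no facet inequalities whatsoever, so neither your argument nor any argument based solely on those inequalities can force $a_1 \ge 0$; indeed every odd integer, positive or negative, satisfies the hypotheses of~(b) as literally written.  This is a wrinkle in the statement (the definition calls $\mathcal C(G)$ a pointed cone, which already fails for $|G| = 2$) rather than a gap in your proof; for $m \ge 3$ your argument goes through as written.
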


In view of Theorem~\ref{t:kunzlatticepts}, given a face $F \subset \mathcal C(\ZZ_m)$, we say $F$ \emph{contains} a numerical semigroup $S$ if the Ap\'ery tuple lies in the relative interior of $F$.  Analogously, we say a face $F' \subset P_m$ \emph{contains} $S$ if the Kunz tuple of $S$ lies in the relative interior of $F'$.
% (it is not hard to show these are equivlent).  

\begin{thm}[{\cite[Theorem~3.3]{kunzfaces1}}]\label{t:groupconefacelattice}
Fix a finite Abelian group $G$ and a face $F \subset \mathcal C(G)$.  
\begin{enumerate}[(a)]
\item 
The set
$H = \{h \in G : x_h = 0 \text{ for all } x \in F\}$
is a subgroup of $G$ (called the \emph{Kunz subgroup} of $F$), and the relation $P = (G/H, \preceq)$ with unique minimal element $\ol 0$ and $\ol a \preceq_P \ol b$ whenever $x_a + x_{b-a} = x_b$ for distinct $a, b \in G$ is a well defined partial order (called the \emph{Kunz poset} of $F$).  

\item 
If $G = \ZZ_m$ with $m \ge 2$ and $F$ contains a numerical semigroup $S$, then the Kunz subgroup of $F$ is trivial and the Kunz poset of $F$ equals the Kunz poset of $S$.  

\item 
In the Kunz poset $P$ of $F$, $\ol b$ covers $\ol a$ if and only if $\ol b - \ol a$ is an atom of $P$.  

\end{enumerate}
\end{thm}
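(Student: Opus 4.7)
My plan is to establish parts (a), (b), (c) in sequence, with a preliminary ingredient being the non-negativity of coordinates on $\mathcal{C}(G)$: $x_h \ge 0$ for all $x \in \mathcal{C}(G)$ and $h \in G \setminus \{0\}$. This can be derived by iterating facet inequalities along orbits in the finite group $G$ and combining the resulting relations; the finiteness of $G$ is essential. I adopt the convention $x_0 = 0$ throughout.

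For (a), I first verify $H$ is a subgroup of $G$: if $h_1, h_2 \in H$ and $h_1 + h_2 \ne 0$, the facet inequality $x_{h_1} + x_{h_2} \ge x_{h_1 + h_2}$ combined with non-negativity forces $x_{h_1 + h_2} = 0$ on $F$; closure under inverses is automatic since $G$ is finite. The relation $\preceq_P$ descends to $G / H$ because $x_{a + h} = x_a$ for all $x \in F$, $a \in G$, and $h \in H$, which follows by applying $x_a + x_h \ge x_{a + h}$ and $x_{a + h} + x_{-h} \ge x_a$ together. Reflexivity is then immediate. For antisymmetry, summing the two defining equalities yields $x_{b - a} + x_{a - b} = 0$, and non-negativity combined with the fact that a linear function vanishing on the relative interior of $F$ vanishes on all of $F$ places $b - a$ in $H$. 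Transitivity is a chaining argument: starting from $x_a + x_{b - a} + x_{c - b} = x_c$ and invoking the facet inequalities $x_{b - a} + x_{c - b} \ge x_{c - a}$ and $x_a + x_{c - a} \ge x_c$, both inequalities are forced to be equalities.

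For (b), the Kunz subgroup is trivial because the Ap\'ery tuple of $S$ lies in the relative interior of $F$ and has every nonzero coordinate strictly positive. The identification of the two Kunz posets comes from evaluating $x_a + x_{b - a} = x_b$ at the Ap\'ery tuple: the equation becomes $\alpha_a + \alpha_{b - a} = \alpha_b$, equivalent to $\alpha_b - \alpha_a = \alpha_{b - a} \in \Ap(S; m)$, which is exactly the Ap\'ery poset relation. For (c), interpreted under the implicit assumption $\ol a \prec_P \ol b$: if $\ol b$ covers $\ol a$ but $\ol b - \ol a$ is not an atom, some $\ol{c'}$ satisfies $\ol 0 \prec_P \ol{c'} \prec_P \ol b - \ol a$, and setting $\ol c = \ol a + \ol{c'}$ and chaining $x_a + x_{c'} \ge x_c$ with $x_c + x_{b - c} \ge x_b$ against the established equalities forces both inequalities to be equalities, producing an intermediate element and contradicting covering. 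The converse direction is symmetric, extracting an intermediate $\ol c - \ol a$ between $\ol 0$ and $\ol b - \ol a$ from an intermediate $\ol c$ between $\ol a$ and $\ol b$.

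The main obstacle is technical rather than conceptual: several arguments derive auxiliary equalities involving coordinates (for instance $x_{b - a} = 0$ during antisymmetry), and one must argue these hold throughout $F$ rather than just at isolated points. The key observation addressing this is that the facet inequalities of $\mathcal{C}(G)$ tight at any point in the relative interior of $F$ coincide with those tight throughout $F$, namely the facets of $\mathcal{C}(G)$ containing $F$. Edge cases involving $a + h = 0$ and similar coincidences in $G$ also require routine separate treatment.
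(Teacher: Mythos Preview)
The paper does not prove this statement; it is quoted verbatim from \cite[Theorem~3.3]{kunzfaces1} and invoked as a black box throughout. There is therefore no proof in the present paper to compare your attempt against.

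That said, your proposal is sound as a self-contained argument. The non-negativity claim does require a small amount of care: for an element $g$ of order $n \ge 3$, telescoping $x_{jg} + x_g \ge x_{(j+1)g}$ over $j = 1, \ldots, n-2$ gives $(n-1)x_g \ge x_{-g}$, and combining this with the symmetric bound yields $n(n-2)x_g \ge 0$; for an element $g$ of order $2$ (when $|G| \ge 3$) one instead picks any $h \notin \{0,g\}$ and chains $x_g + x_h \ge x_{g+h}$ with $x_{g+h} + x_g \ge x_h$ to obtain $2x_g \ge 0$. Your phrase ``iterating facet inequalities along orbits'' covers the first case but not obviously the second. One further remark: your ``key observation'' is stated for facet inequalities, but the derived inequality $x_{b-a} \ge 0$ used in antisymmetry is not itself a facet inequality of $\mathcal C(G)$. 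What you actually need is the standard fact that any valid linear inequality on a cone that is tight at a relative interior point of a face $F$ is tight on all of $F$ (since its zero set is a face containing that interior point); this is what your argument uses, and it is correct.

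Your reading of part~(c) under the standing hypothesis $\ol a \prec_P \ol b$ matches how the result is applied later in the paper (e.g., in the proof of Theorem~\ref{t:arithposet}), and both directions of your argument there go through.
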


\begin{remark}\label{r:automorphisms}
The automorphism group of a finite Abelian group $G$ acts on the face lattice of $\mathcal C(G)$ by permuting coordinates in the natural way.  Each such automorphsim induces a group isomorphism between the corresponding Kunz subgroups, as well as on the corresponding Kunz posets.  
\end{remark}

%%%%%%%%%%%%%%%%%%%%%%%%%%%%%%%%%%%%%%%%%%%%%%%%%%%%%%%%%%%%%%%%%%%%%%%%%
\section{Extra-generalized arithmetical numerical semigroups}%%%%%%%%%%%%
\label{sec:arithposets}%%%%%%%%%%%%%%%%%%%%%%%%%%%%%%%%%%%%%%%%%%%%%%%%%%
%raggedbottom%%%%%%%%%%%%%%%%%%%%%%%%%%%%%%%%%%%%%%%%%%%%%%%%%%%%%%%%%%%%

Arithmetical numerical semigroups, which are numerical semigroups whose minimal generating set is an arithmetic sequence, are a common focal point in the literature.  It~turns out the polyhedral faces that contain arithmetical numerical semigroups also contain semigroups from two related families:\ generalized arithmetical numerical semigroups (previously studied in \cite{omidali,omidalirahmati}) and a new family (Definition~\ref{d:extragenarith}).  In this section, we provide a characterization of the Kunz posets for this new family (Corollary~\ref{c:arithposet}) using an adapted membership criterion (Proposition~\ref{p:arithmembercrit}), as well as a formula for their Frobenius numbers (Corollary~\ref{c:extraarithfrob}).  

\begin{defn}\label{d:extragenarith}
An \emph{extra-generalized arithmetical numerical semigroup} has the form
$$S = \<a, ah + d, ah + 2d, \ldots, ah + kd\>$$
for $a, h, k \in \ZZ_{\ge 1}$ and $d \in \ZZ$ with $k < a$, $\gcd(a,d) = 1$ and $ah + kd > a$.  If $d \ge 1$, then we call $S$ a \emph{generalized arithmetical numerical semigroup}, and if $d < 0$, then we call $S$ a \emph{pessimistic arithmetical numerical semigroup}.  
\end{defn}

\begin{example}\label{e:extragenarith}
Consider the semigroup $S = \<11,12,14,16,18,20\>$, whose Kunz poset is depicted in Figure~\ref{f:pessimisticposet}.  Reordering the generators as $S = \<11,20,18,16,14,12\>$ reveals that $S$ is extra-generalized arithmetical with $d = -2$.  This has the same Kunz poset as $S' = \<11,20,29,38,47,56\>$, whose common difference $d' = 9$ satisfies $d' \equiv d \bmod a$.  

It is important to note the extra requirement that $ah + kd > a$ in Definition~\ref{d:extragenarith} is in place to ensure the given generating set is minimal.  Indeed, the semigroup $\<11,20,18,16,14,12,10\>$ demonstrates this need not be the case if the assumption is dropped.  As it turns out, this assumption also forces $a$ to be the multiplicity of~$S$.  
\end{example}

\begin{figure}[t!]
\begin{center}
\begin{subfigure}[t]{0.30\textwidth}
\begin{center}
\includegraphics[height=1.5in]{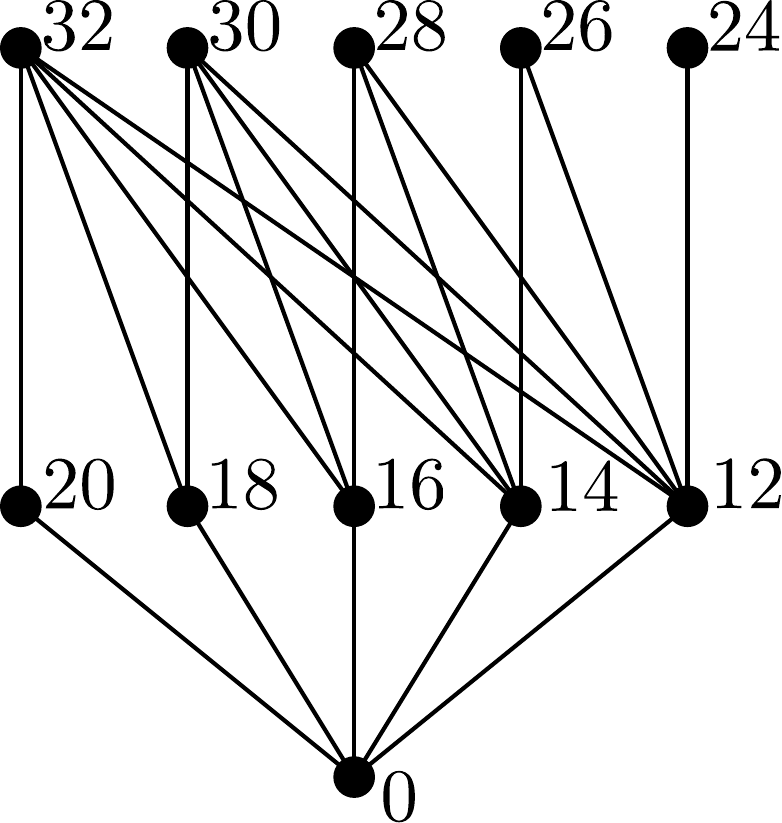}
\end{center}
\caption{}
\label{f:pessimisticposet}
\end{subfigure}
\hspace{0.02\textwidth}
\begin{subfigure}[t]{0.30\textwidth}
\begin{center}
\includegraphics[height=1.5in]{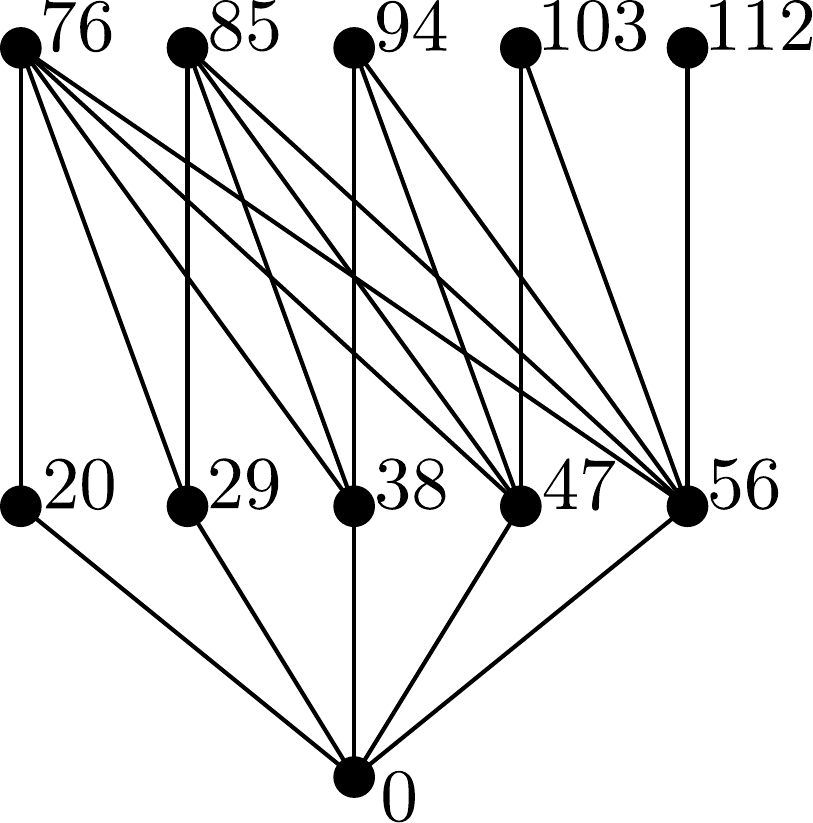}
\end{center}
\caption{}
\label{f:nonpessimisticposet}
\end{subfigure}
\hspace{0.02\textwidth}
\begin{subfigure}[t]{0.30\textwidth}
\begin{center}
\includegraphics[height=1.5in]{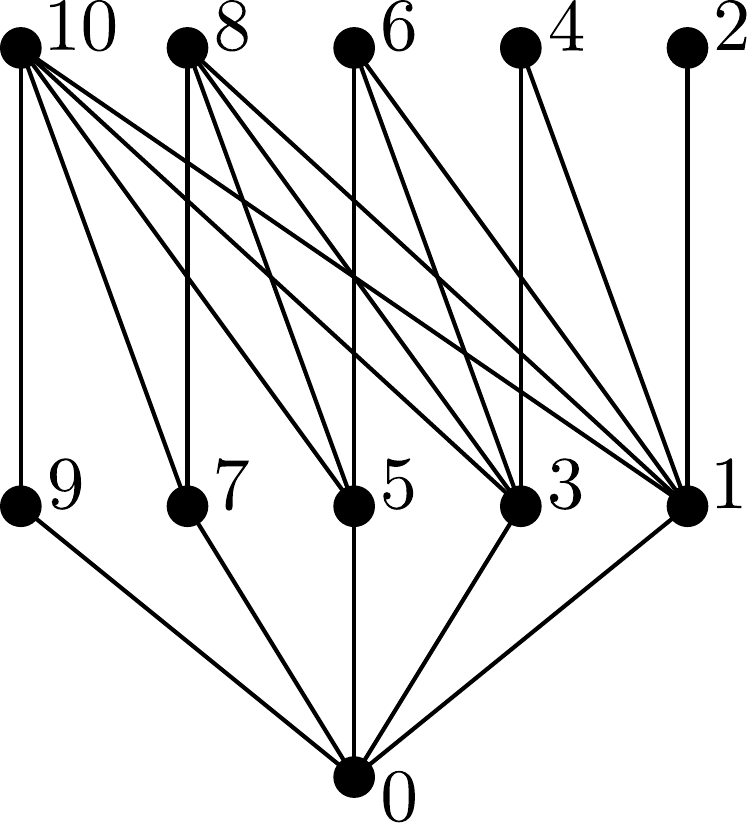}
\end{center}
\caption{}
\label{f:kunzpessimisticposet}
\end{subfigure}
\end{center}
\caption{Ap\'ery posets of the semigroups $S = \<11,12,14,16,18,20\>$ (left) and $S' = \<11,20,29,38,47,56\>$ (middle) from Example~\ref{e:extragenarith}, along with their shared Kunz poset (right).}
\end{figure}

We begin by generalizing a membership criterion for generalized arithmetical numerical semigroups to the extra-generalized family.  

\begin{prop}\label{p:arithmembercrit}
Fix an extra-generalized arithmetical numerical semigroup
$$S = \<a, ah + d, ah + 2d, \ldots, ah + kd\>.$$
Fix $n \in \ZZ$, and let $n = qa + rd$ for $q, r \in \ZZ_{\ge 0}$ with $0 \le r \le a - 1$.  We have 
\begin{enumerate}[(a)]
\item 
$n \in S$ if and only if $\lceil \tfrac{r}{k} \rceil h \le q$, and

\item 
$n \in \Ap(S;a)$ if and only if $\lceil \tfrac{r}{k} \rceil h = q$.  

\end{enumerate}
\end{prop}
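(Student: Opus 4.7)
The plan is to prove (a) first and derive (b) routinely from it.  For (b), I would observe that $n - a = (q-1)a + rd$ shares the same $r$-coordinate as $n$, so applying (a) to both translates the condition $n \in \Ap(S;a)$ (that is, $n \in S$ and $n - a \notin S$) into $q \ge \lceil r/k \rceil h$ and $q - 1 < \lceil r/k \rceil h$, which force $q = \lceil r/k \rceil h$.

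For (a) $(\Leftarrow)$, my approach is constructive: set $C := \lceil r/k \rceil$, and since $C \le r \le kC$ when $r \ge 1$, pick non-negative integers $c_1, \ldots, c_k$ with $\sum c_i = C$ and $\sum i c_i = r$; then $c_0 := q - Ch$ is non-negative by hypothesis and
$$qa + rd = c_0 a + \sum_{i=1}^k c_i (ah + id) \in S.$$
For (a) $(\Rightarrow)$, I would start from any representation $n = c_0 a + \sum c_i(ah+id)$ with $c_i \ge 0$ and set $C := \sum c_i$ and $R := \sum i c_i$, so that $C \le R \le kC$ and $n = (c_0 + Ch)a + Rd$.  Since $R \ge 0$ and $R \equiv r \pmod{a}$ with $0 \le r < a$, I can write $R = r + \ell a$ with $\ell \ge 0$, which gives $q = c_0 + Ch + \ell d$.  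The target inequality $q \ge \lceil r/k \rceil h$ then reduces, via $c_0 \ge 0$ and $C \ge \lceil R/k \rceil$, to
$$\bigl(\lceil (r + \ell a)/k \rceil - \lceil r/k \rceil\bigr) h + \ell d \ge 0.$$
In the generalized arithmetical case $d \ge 0$ both summands are non-negative, so this is immediate.

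The main obstacle is the pessimistic case $d < 0$, where the reduced inequality becomes $\bigl(\lceil (r + \ell a)/k \rceil - \lceil r/k \rceil\bigr) h \ge \ell |d|$ for every $\ell \ge 0$.  To handle this, I plan to write $a = pk + q'$ with $0 \le q' < k$ and examine the per-step ceiling increment $\lceil (r + (\ell+1)a)/k \rceil - \lceil (r + \ell a)/k \rceil$ by case analysis on $(r + \ell a) \bmod k$; each such increment lies in $\{p, p+1\}$, so the cumulative increment over $\ell$ steps is at least $p\ell$.  The hypothesis $ah + kd > a$, which integer-strictly gives $ah - k|d| \ge a + 1$, should then force $ph \ge |d|$---immediately when $k \mid a$, and via a more delicate residue analysis distinguishing the subcases of the single-step increment when $q' > 0$---which closes the proof.
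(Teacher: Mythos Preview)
Your treatment of (b) and of (a)($\Leftarrow$) is correct and essentially matches the paper: the paper derives (b) from (a) in the same way, and handles (a)($\Leftarrow$) by induction on $r$ where you give an equivalent direct construction.

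The genuine gap is in (a)($\Rightarrow$) for the pessimistic case $d<0$. Your reduction to
\[
\bigl(\lceil (r+\ell a)/k\rceil - \lceil r/k\rceil\bigr)\,h \;\ge\; \ell\,|d|
\]
is valid, but no ``more delicate residue analysis'' can establish this inequality, because it is \emph{false} under the stated hypotheses. Take $a=11$, $k=3$, $h=8$, $d=-25$: then $\gcd(a,d)=1$, $k<a$, and $ah+kd=13>11=a$, so all hypotheses of the proposition hold. With $r=1$ and $\ell=1$ your inequality reads $(\lceil 12/3\rceil-\lceil 1/3\rceil)\cdot 8\ge 25$, i.e.\ $24\ge 25$. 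In fact the proposition itself fails for this semigroup: $n=52=4\cdot 13=4(ah+3d)\in S$, while $52=7\cdot 11+1\cdot(-25)$ gives $(q,r)=(7,1)$ with $\lceil 1/3\rceil h=8>7=q$. The paper's own proof has exactly the same gap: its final displayed inequality $h\sum_i z_i\le q$ tacitly uses $z_0+q'd\ge 0$, and for the factorization $52=4\cdot 13$ one computes $h\sum_i z_i=32$ versus $q=7$. So neither argument can be completed as written; some additional hypothesis controlling $|d|$ is needed in the pessimistic case.
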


\begin{proof}
First, suppose $n \in S$, so for some $z_0, z_1, \ldots, z_k \in \ZZ_{\ge 0}$, we have 
$$n = z_0 a + \sum_{i=1}^k z_i(ah + id).$$
Write $\sum_{i=1}^k z_i i = q'a + r$ for $q', r \in \ZZ_{\ge 0}$ with $r < a$.   Letting
$$q = z_0 + h\sum\limits_{i=1}^k z_i + q' d,$$
we obtain
$$
qa + rd
= z_0 a + ah\sum_{i=1}^k z_i + q'ad + rd
% = z_0 a + \sum\limits_{i=1}^k ah z_i + \sum_{i=1}^k z_i id
= z_0 a + \sum_{i=1}^k z_i (ah + id)
=n
$$
and
% $$
% \frac{r}{k}
% % = \frac{1}{k}\bigg(\sum_{i=1}^k z_i i\mod{a}\bigg)
% \le \frac{1}{k}\sum_{i=1}^k z_i i
% % = \sum_{i=1}^k z_i\frac{i}{k}
% \le \sum_{i=1}^k z_i,
% $$
% we conclude that 
$$
\bigg\lceil \frac{r}{k} \bigg\rceil h
\le \bigg\lceil \frac{1}{k}\sum_{i=1}^k z_i i \bigg\rceil h
\le \bigg\lceil \sum_{i=1}^k z_i \bigg\rceil h
= h \sum_{i=1}^k z_i
\le q.
$$

Conversely, assume $\lceil \frac{r}{k} \rceil h \le q$ for some $n = aq + rd$ with $q, r \in \ZZ_{\ge 0}$ and $0 \le r \le a - 1$.  If $r = 0$, then $n = aq$ clearly implies $n \in S$, and if $0 < r \le k$, then the bounds on $r$ imply $\lceil \frac{r}{k} \rceil = 1$, so $h \le q$ and 
$$n = aq + rd = (q - h)a + (ha + rd) \in S.$$
Lastly, if $k < r$, then 
$$n = qa + rd = (q - h + h)a + (r - k + k)d = (ha + kd) + (q - h)a + (r - k)d,$$
so it suffices to show $(q - h)a + (r - k)d \in S$.  Since $0 \le r - k \le a - 1$ and
$$\bigg\lceil \frac{r-k}{k} \bigg\rceil h = \bigg\lceil \frac{r}{k} \bigg\rceil h - h \le q - h,$$
we conclude by induction on $r$ that $n \in S$.  This completes the proof of part~(a).  

For part~(b), $n \in \Ap(S;a)$ occurs when $n \in S$ and $n - a \notin S$.  Writing $n = qa + rd$ as above, we check by part~(a) that this happens if and only if $q - 1 < \lceil \frac{r}{k} \rceil h \le q$, which is equivalent to the desired equality.  
\end{proof}

\begin{thm}\label{t:arithposet}
Fix an extra-generalized arithmetical numerical semigroup
$$S = \<a, ah + d, ah + 2d, \ldots, ah + kd\>,$$
and write $a - 1 = qk + r$ for $q, r \in \ZZ_{\ge 0}$ with $r < k$.  
\begin{enumerate}[(a)]
\item 
Each nonzero element $a_i \in \Ap(S;a)$ has the form
$$a_i = x_iah + ((x_i - 1)k + y_i)d$$
for either $x_i \in [1, q]$ and $y_i \in [1,k]$, or $x_i = q + 1$ and $y_i \in [1,r]$.  
% $$\Ap(S;a) = \{0\} \cup \{xah + ((x-1)k + y)d : 1 \le x \le q, \, 1 \le y \le k\}.$$

\item 
We have $a_i \prec a_j$ in $\Ap(S;a)$ if and only if $x_i < x_j$ and $y_i \ge y_j$.  

\item 
An element $a_j$ covers $a_i$ in $\Ap(S;a)$ if and only if $x_j = x_i + 1$ and $y_i \ge y_j$.  

\end{enumerate}
\end{thm}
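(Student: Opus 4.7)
I plan to prove part~(a) directly from Proposition~\ref{p:arithmembercrit}(b), then establish a combinatorial lemma about sums of Apéry elements that yields both~(b) and~(c) as quick corollaries.

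Part~(a) follows immediately from Proposition~\ref{p:arithmembercrit}(b): writing each nonzero $n \in \Ap(S;a)$ as $n = q'a + r'd$ with $q', r' \ge 0$, $r' \in [1, a-1]$, and $\lceil r'/k \rceil h = q'$, I reparametrize $r' = (x-1)k + y$ with $x = \lceil r'/k \rceil \ge 1$ and $y = r' - (x-1)k \in [1, k]$. Then $q' = xh$ and $n = xah + ((x-1)k + y)d$, while the constraint $r' \le a-1 = qk + r$ splits exactly into the two claimed ranges for $(x, y)$.

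The key lemma I plan to prove next is: \emph{for valid indices $(x_i, y_i)$ and $(x^*, y^*)$ parametrizing Apéry elements as in~(a), the element $a_{x_i, y_i} + a_{x^*, y^*}$ lies in $\Ap(S;a)$ if and only if $y_i + y^* > k$ and $(x_i + x^*, y_i + y^* - k)$ is itself a valid index, in which case the sum equals $a_{x_i + x^*, y_i + y^* - k}$.} Expanding the formula $a_{x,y} = xah + ((x-1)k + y)d$ and rewriting $y_i + y^* = k + (y_i + y^* - k)$ verifies the ``if'' direction. When $y_i + y^* \le k$, the same expansion shows the sum equals $a_{x_i + x^* - 1, y_i + y^*} + ah$, which sits $ah$ above the Apéry representative of its residue class and hence is not in $\Ap(S;a)$. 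In the remaining case (where $y_i + y^* > k$ but $(x_i + x^*, y_i + y^* - k)$ is out of range), I would compute the canonical form $q'a + r'd$ of the sum and apply Proposition~\ref{p:arithmembercrit}(b) to show the Apéry equality $\lceil r'/k \rceil h = q'$ fails.

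Given the lemma, parts~(b) and~(c) drop out quickly. For~(b), $a_i \prec a_j$ iff $a_j - a_i = a_{x^*, y^*}$ for some valid $(x^*, y^*)$, iff (by the lemma) $x^* = x_j - x_i \ge 1$ and $y^* = y_j - y_i + k \in [1, k]$ both lie in valid ranges, iff $x_i < x_j$ and $y_j \le y_i$. For~(c), Theorem~\ref{t:groupconefacelattice}(c) identifies covers with additions of atoms; since the atoms of $\Ap(S;a)$ are the minimal generators of $S$ other than $a$, namely $a_{1, \ell}$ for $\ell \in [1, k]$, specializing the lemma to $x^* = 1$ and $y^* = \ell \in [1, k]$ gives $x_j = x_i + 1$ and $y_j \in [1, y_i]$.

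The main obstacle I anticipate is the case analysis in the ``only if'' direction of the lemma: in the edge situations where the natural target index is invalid, showing the sum is not in $\Ap(S;a)$ involves ruling out degenerate parameter combinations (for instance, certain values with $h \mid d$ could in principle produce additional Apéry elements). These degeneracies are excluded implicitly by the minimality of the given generating set, itself guaranteed by the condition $ah + kd > a$ in Definition~\ref{d:extragenarith}, so verifying this exclusion carefully via Proposition~\ref{p:arithmembercrit} is where the bulk of the technical work will sit.
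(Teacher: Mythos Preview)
Your plan is sound and will work, but it takes a longer route than the paper. The paper reverses your order and proves~(c) \emph{before}~(b). For~(c) it argues directly: since covers correspond to differences that are atoms (Theorem~\ref{t:groupconefacelattice}(c)), one checks that $a_j - a_i = ah + md$ for some $m \in [1,k]$ forces $x_j = x_i + 1$ and $y_i - y_j = k - m \ge 0$, and conversely that this condition makes $a_j - a_i$ a minimal generator. With~(c) established, any relation $a_i \prec a_j$ factors through a chain of covers, so $x_i < x_j$ comes for free; the paper then writes
\[
a_j - a_i = (x_j - x_i)ah + \big((x_j - x_i)k + y_j - y_i\big)d,
\]
notes that the $d$-coefficient already lies in $[0, a-1]$ (since $x_i < x_j$ forces $r_i < r_j$), and applies Proposition~\ref{p:arithmembercrit}(b) once to obtain the criterion $y_i \ge y_j$. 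No auxiliary lemma is needed.

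Your lemma is correct but more general than the applications require. The case you flag as the main obstacle---where the target index $(x_i + x^*, y_i + y^* - k)$ is out of range---corresponds exactly to the ``wrap-around'' $r_i + r^* \ge a$ in the canonical representation $qa + rd$. But in both~(b) and~(c) the sum $a_i + a_{x^*,y^*}$ equals the \emph{given} Ap\'ery element $a_j$, whose index is valid by hypothesis; so you never actually need that case, and the delicate edge analysis you anticipate can be skipped. The paper's (c)-then-(b) ordering makes this automatic: knowing $x_i < x_j$ gives $r_i < r_j$, which rules out the wrap-around outright. If you keep the lemma-first structure, you can still shortcut the hard case by comparing canonical forms directly once you know the sum is Ap\'ery---but at that point you have essentially reproduced the paper's argument with extra scaffolding.
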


\begin{proof}
Part~(a) follows from Proposition~\ref{p:arithmembercrit}(b).  For part~(c), if $x_j = x_i + 1$ and $y_i \ge y_j$, then $a_j - a_i = ah + (k + y_j - y_i)d$ is a minimal generator of $S$, which by Theorem~\ref{t:groupconefacelattice}(c) implies $a_j$ covers $a_i$.  Conversely, suppose $a_j$ covers $a_i$, which by Theorem~\ref{t:groupconefacelattice}(c) means $a_j - a_i = ah + md$ with $1 \le m \le k$.  With $a_i$ and $a_j$ written as in part~(a), we see $x_j - x_i = 1$ and $y_i - y_j = k - m \ge 0$, as desired.  

Lastly, for part~(b), we cannot have $a_i \prec a_j$ unless $x_i < x_j$ by part~(c).  In this case,
$$a_j - a_i = (x_j - x_i)ah + ((x_j - x_i)k + y_j - y_i)d$$
with 
$0 \le (x_j - x_i)k + y_j - y_i \le a - 1$,
so $a_j - a_i \in  \Ap(S;a)$ by Proposition~\ref{p:arithmembercrit}(b) when
$$
\big\lceil \tfrac{1}{k}((x_j - x_i)k + y_j - y_i) \big\rceil
= x_j - x_i + \big\lceil \tfrac{1}{k}(y_j - y_i) \big\rceil
= x_j - x_i,
$$
which happens precisely when $y_i \ge y_j$.  
\end{proof}

\begin{cor}\label{c:arithposet}
Resume notation from Theorem~\ref{t:arithposet}, and suppose $S$ has Kunz poset~$P$.  
\begin{enumerate}[(a)]
\item 
The elements of $P$ have the form $[a_i] = [md]$, where $m = ((x_i - 1)k + y_i)$ takes each integer value in $[0, a - 1]$.  

\item 
The Kunz poset $P$ is graded, with each $[a_i]$ occuring with height $x_i$.  

\item 
The Kunz poset $P$ depends only on $a$, $k$, and the residue of $d$ modulo $a$.  

% \item 
% The Kunz poset of $S$ of can be drawn so that the nonzero elements form a grid with $k$ columns where $y_j$ specifies position from the left within each row, and cover relations between sequential rows are drawn precisely when they do not have negative slope.  All rows have $k$ elements except possibly the top row (where $x_i = q + 1$), which has $r$ elements.  

\end{enumerate}
\end{cor}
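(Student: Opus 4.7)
The plan is to deduce all three parts directly from Theorem~\ref{t:arithposet}, essentially translating its statements about $\Ap(S;a)$ into statements about the Kunz poset $P$ on ground set $\ZZ_a$.

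For part~(a), I would reduce the formula $a_i = x_i ah + ((x_i - 1)k + y_i)d$ modulo $a$; since $[ah] = [0]$ in $\ZZ_a$, this immediately gives $[a_i] = [md]$ for $m = (x_i - 1)k + y_i$. The only nontrivial check is that $m$ ranges over all of $[0, a-1]$: by Theorem~\ref{t:arithposet}(a), the admissible pairs $(x_i, y_i)$ consist of $x_i \in [1,q]$ with $y_i \in [1,k]$ together with $x_i = q+1$ and $y_i \in [1,r]$, and the corresponding values of $m$ tile the interval $[1, qk + r] = [1, a-1]$ using $a - 1 = qk + r$. The zero class contributes $m = 0$. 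Since $\gcd(a,d) = 1$, the map $m \mapsto [md]$ is a bijection from $[0, a-1]$ onto $\ZZ_a$, so the elements of $P$ are exactly parametrized this way.

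For part~(b), Theorem~\ref{t:arithposet}(c) says $a_j$ covers $a_i$ exactly when $x_j = x_i + 1$ (with $y_i \ge y_j$). Thus the function $[a_i] \mapsto x_i$ (and $[0] \mapsto 0$) increases by exactly one across each covering relation. Combined with the fact that $[0]$ is the unique minimum of $P$ and that the atoms $[ah + jd] = [jd]$ for $j \in [1,k]$ correspond to $x = 1$, this shows $P$ is graded with rank function $x_i$.

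For part~(c), I would observe that the admissible set of pairs $(x_i, y_i)$ depends only on $a$ and $k$ (through the quotient $q$ and remainder $r$ of $a-1$ by $k$), and that the identification $[a_i] = [md]$ of each pair with an element of $\ZZ_a$ depends only on $a$, $k$, and the residue of $d$ modulo $a$. Because Theorem~\ref{t:arithposet}(b) describes the order relation entirely in terms of the pairs, $P$ is completely determined by this data. There is no real obstacle here: the corollary is a direct repackaging of Theorem~\ref{t:arithposet}, and the only small arithmetic verification needed is the tiling computation in part~(a).
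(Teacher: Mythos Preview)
Your proposal is correct and follows essentially the same approach as the paper: part~(a) from Theorem~\ref{t:arithposet}(a) plus the division algorithm, part~(b) from Theorem~\ref{t:arithposet}(c), and part~(c) by observing that the data in the earlier parts depends only on $a$, $k$, and $d \bmod a$. The only cosmetic difference is that for part~(c) you invoke Theorem~\ref{t:arithposet}(b) directly to pin down the order relation, whereas the paper appeals to parts~(a) and~(b) of the corollary itself; both routes amount to the same observation.
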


\begin{proof}
Part~(a) follows from Theorem~\ref{t:arithposet}(a) and the division algorithm, and part~(b) follows from Theorem~\ref{t:arithposet}(c).  Lastly, part~(c) follows by examining the statements of parts~(a) and~(b).  
\end{proof}

\begin{remark}\label{r:arithposetdrawing}
Again resuming notation from Theorem~\ref{t:arithposet}, the poset $P$ can be drawn so that the nonzero elements form a grid with $k$ columns where $y_j$ specifies the position from the left within each row, and cover relations between sequential rows are drawn precisely when they do not have positive slope.  Additionally, reading elements left to right and bottom to top yields $[0], [d], [2d], \ldots, [(a-1)d]$, and all rows have $k$~elements except possibly the top row (where $x_i = q + 1$), which has $r$ elements.  
\end{remark}

We close this section with a formula for the Frobenius number of an extra-generalized arithmetical numerical semigroup, extending \cite[Theorem~2.8]{omidalirahmati} to the case when $d < 0$.  Our proof, including for positive $d$, utilizes the poset structure from Theorem~\ref{t:arithposet}.  

\begin{cor}\label{c:extraarithfrob}
For a given extra-generalized arithmetical numerical semigroup
$$S = \<a, ah + d, ah + 2d, \ldots, ah + kd\>,$$
we have
$$F(S) = \begin{cases}
\lceil \tfrac{a-1}{k} \rceil ah + (a - 1)d - a & \text{if } d > 0; \\
\lceil \tfrac{a-1}{k} \rceil (ah + kd) + (1-k)d - a & \text{if } d < 0.
\end{cases}$$
\end{cor}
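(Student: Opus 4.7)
The plan is to invoke the standard identity $F(S) = \max \Ap(S;a) - a$ and then maximize over $\Ap(S;a)$ using the explicit parametrization furnished by Theorem~\ref{t:arithposet}(a). Write $a - 1 = qk + r$ with $0 \le r < k$, so that $\lceil (a-1)/k \rceil = q+1$ when $r > 0$ and $\lceil (a-1)/k \rceil = q$ when $r = 0$.  Every nonzero element of $\Ap(S;a)$ has the form $x_i ah + ((x_i - 1)k + y_i)d$, where $(x_i, y_i)$ ranges over the allowed pairs from Theorem~\ref{t:arithposet}(a).

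For $d > 0$, both summands $x_i ah$ and $((x_i - 1)k + y_i)d$ are monotone increasing in their respective parameters, so I would maximize both $x_i$ and $(x_i - 1)k + y_i$ simultaneously. Inspection of the allowed pairs shows that $(x_i - 1)k + y_i$ sweeps out $[1, a-1]$, with the maximum $a - 1$ attained at $(q+1, r)$ when $r > 0$ and at $(q, k)$ when $r = 0$; in either case $x_i = \lceil (a-1)/k \rceil$.  Substituting and subtracting $a$ yields the first branch of the formula.

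For $d < 0$, I would instead maximize $x_i$ and \emph{minimize} $(x_i - 1)k + y_i$.  The key observation is that replacing an allowed pair $(x_i, y_i)$ by $(x_i + 1, 1)$ (when permitted) alters the Ap\'ery element by $ah + (k + 1 - y_i)d \ge ah + kd$, and $ah + kd$ is a minimal generator of $S$, hence strictly positive.  Thus the maximum is attained by taking $x_i$ as large as possible, namely $q+1$ when $r > 0$ and $q$ when $r = 0$, paired with $y_i = 1$; again $x_i = \lceil (a-1)/k \rceil$ in both subcases.  A short algebraic manipulation, rewriting $\lceil (a-1)/k \rceil ah + (\lceil (a-1)/k \rceil k - k + 1)d$ as $\lceil (a-1)/k \rceil (ah + kd) + (1-k)d$, yields the second branch after subtracting $a$.

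The main (and really only) obstacle is the bookkeeping between the two cases $r = 0$ and $r > 0$ when reading off $\lceil (a-1)/k \rceil$ from the statement of Theorem~\ref{t:arithposet}(a), and verifying that the two optimizing corners collapse to a single clean closed form on each branch.  Once the parametrization from Theorem~\ref{t:arithposet}(a) is in hand and one uses $ah + kd > 0$ as the monotonicity input, the argument reduces to elementary comparisons.
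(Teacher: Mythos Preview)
Your proof is correct and follows essentially the same approach as the paper's: both arguments use the parametrization from Theorem~\ref{t:arithposet}(a) to locate the maximum Ap\'ery element by first pushing $x_i$ as large as possible (the paper phrases this as ``the max must occur in the top row'' via the poset picture in Remark~\ref{r:arithposetdrawing}, while you argue it directly via the increment $(x_i,y_i)\mapsto(x_i+1,1)$ being at least $ah+kd>0$), and then selecting $y_i$ according to the sign of $d$. The only difference is packaging: the paper leans on the poset description, whereas your optimization is more explicitly computational and self-contained.
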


\begin{proof}
Write $a - 1 = qk + r$ for $q, r \in \ZZ_{\ge 0}$ with $r < k$.  
% The $d > 0$ case is given in \cite[Theorem~2.8]{omidalirahmati}, so suppose $d < 0$.  
% Recall that $F(S) = \max(\Ap(S;a)) - a$.  
Regardless of whether $d$ is positive or negative, by Theorem~\ref{t:arithposet} and Remark~\ref{r:arithposetdrawing}, $a_j = \max(\Ap(S;a))$ must occur in the top row of the Ap\'ery poset.  If $d > 0$, then $a_j$ is the last element of the top row, meaning either $x_j = q + 1$ and $y_j = r$, or $x_j = q$ and $y_j = k$ (this is the case where $r = 0$), both of which yield
$$F(S) = a_j - a = x_jah + ((x_j - 1)k + y_j)d - a = \big\lceil \tfrac{a-1}{k} \big\rceil ah + (a - 1)d - a.$$
On the other hand, if $d < 0$, then $a_j$ is the first element in the top row, meaning $y = 1$ and either $x_j = q+1$ or $x_j = q$.  In either case, $x_j = \lceil \tfrac{a-1}{k} \rceil$ and 
$$F(S) = a_j - a = x_jah + ((x_j - 1)k + y_j)d - a = \big\lceil \tfrac{a-1}{k} \big\rceil (ah + kd) + (1-k)d - a,$$
as desired.  
\end{proof}

%%%%%%%%%%%%%%%%%%%%%%%%%%%%%%%%%%%%%%%%%%%%%%%%%%%%%%%%%%%%%%%%%%%%%%%%%
\section{Polyhedra faces containing arithmetical numerical semigroups}%%%
\label{sec:arithfaces}%%%%%%%%%%%%%%%%%%%%%%%%%%%%%%%%%%%%%%%%%%%%%%%%%%%
%raggedbottom%%%%%%%%%%%%%%%%%%%%%%%%%%%%%%%%%%%%%%%%%%%%%%%%%%%%%%%%%%%%

Having now characterized the Kunz posets of extra-generalized arithmetical numerical semigroups, we examine the geometric properties of faces containing such semigroups.  In particular, we characterize their dimension (Theorem~\ref{t:arithfacedim}) and, for some, their defining rays (Theorem~\ref{t:arithrays}).  Additionally, we prove that in most cases, the faces contain only extra-generalized numerical semigroups (Theorem~\ref{t:onlyarithfaces})

We begin by describing the orbits (in the sense of Remark~\ref{r:automorphisms}) of faces containing extra-generalized arithmetical numerical semigroups, allowing us to restrict some arguments to the case when $d \equiv 1 \bmod a$.  

\begin{lemma}\label{l:arithautomorphisms}
Fix an extra-generalized arithmetical numerical semigroup
$$S = \<a, ah + d, ah + 2d, \ldots, ah + kd\>$$
with containing face $F \subset \mathcal C(\ZZ_a)$.  
Applying any automorphism of $C(\ZZ_a)$ induced by multiplication by some $u \in \ZZ_a^*$ to $F$ yields the face $F'$.  Moreover, $F'$ contains
$$S' = \<a, ah + d', ah + 2d', \ldots, ah + kd'\>$$
for any positive $d'$ with $d' \equiv ud \bmod a$.  
\end{lemma}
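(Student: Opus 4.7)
Plan: I would work with Kunz posets rather than directly with Ap\'ery coordinates. First, by Remark~\ref{r:automorphisms}, the automorphism $\varphi_u$ of $\mathcal C(\ZZ_a)$ induced by multiplication by $u \in \ZZ_a^*$ sends $F$ to a face $F' = \varphi_u(F)$ whose Kunz subgroup and Kunz poset are obtained from those of $F$ via the induced action on $\ZZ_a$, namely multiplication by~$u$. Since $F$ contains $S$, Theorem~\ref{t:groupconefacelattice}(b) gives that the Kunz subgroup of $F$ is trivial (hence so is that of $F'$) and that the Kunz poset of $F$ is exactly the Kunz poset of $S$. Consequently, the Kunz poset of $F'$ is the Kunz poset of $S$ with each element $\alpha \in \ZZ_a$ relabeled as $u\alpha$.

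Second, I would identify the Kunz poset of $S'$ with this same relabeling. A brief check shows $S'$ satisfies Definition~\ref{d:extragenarith}: the hypothesis $d' > 0$ with $d' \equiv ud \bmod a$ gives $\gcd(a, d') = 1$, and $ah + kd' > a$ follows from $h, k \ge 1$. By Theorem~\ref{t:arithposet}(a), the nonzero elements of $\Ap(S';a)$ admit the same $(x, y)$-parameterization as those of $\Ap(S;a)$, except with residue labels $[((x-1)k + y)d'] = u \cdot [((x-1)k + y)d] \in \ZZ_a$. By Theorem~\ref{t:arithposet}(b), the order relations on both $\Ap(S;a)$ and $\Ap(S';a)$ are determined solely by the underlying $(x, y)$ data. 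Hence multiplication by $u$ realizes a poset isomorphism from the Kunz poset of $S$ to that of $S'$, matching the description of the Kunz poset of $F'$ obtained above.

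To finish, both $F'$ and the face of $\mathcal C(\ZZ_a)$ containing $S'$ have trivial Kunz subgroup and identical Kunz posets. The bijection between faces of $\mathcal C(\ZZ_a)$ and their Kunz data, established in \cite{kunzfaces1} (and recalled in Theorem~\ref{t:groupconefacelattice}), then forces $F'$ to equal the face containing~$S'$, as desired.

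The main obstacle is the bookkeeping in the middle paragraph: verifying that multiplication by $u$ on the labels of $S$'s Kunz poset produces exactly the labels of $S'$'s Kunz poset in a manner compatible with the order relations from Theorem~\ref{t:arithposet}. Once that is in hand, the rest follows from the face-lattice bijection of~\cite{kunzfaces1}.
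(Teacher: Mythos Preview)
Your proposal is correct and follows essentially the same approach as the paper: the paper's proof is the one-liner ``Multiply each element of the Kunz poset of $S$ by $u$ and apply Corollary~\ref{c:arithposet},'' which packages your middle paragraph into a single citation of Corollary~\ref{c:arithposet}(c) (the Kunz poset depends only on $a$, $k$, and $d \bmod a$). Your version unpacks this by going back to Theorem~\ref{t:arithposet}(a)--(b) and explicitly checking that $S'$ satisfies Definition~\ref{d:extragenarith}, but the underlying idea is identical.
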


\begin{proof}
Multiply each element of the Kunz poset of $S$ by $u$ and apply Corollary~\ref{c:arithposet}.  
\end{proof}

\begin{thm}\label{t:arithfacedim}
Given an extra-generalized arithmetical numerical semigroup
$$S = \<a, ah + d, ah + 2d, \ldots, ah + kd\>$$
with containing face $F \subset C(\ZZ_a)$, we have
$$\dim F = \begin{cases}
a - 1                        & \text{if } k = a - 1; \\
\lfloor \tfrac{a}{2} \rfloor & \text{if } k = a - 2; \\
2                            & \text{if } 1 < k < a - 2; \\
1                            & \text{if } k = 1.
\end{cases}$$
\end{thm}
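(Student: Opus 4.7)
The plan is to realize $F$ as the intersection of $\mathcal{C}(\ZZ_a)$ with the linear subspace cut out by the equalities imposed by the Kunz poset of $S$. By Theorem~\ref{t:groupconefacelattice}(a), each comparable pair $[id] \prec [jd]$ in the Kunz poset contributes an equality $x_{id} + x_{(j-i)d} = x_{jd}$; by transitivity of $\preceq$ (whenever $[\ol a] \prec [\ol b] \prec [\ol c]$ are covers, $[\ol{b-a}] \prec [\ol{c-a}]$ is also a cover, which lets one write the non-cover equality as a combination of three cover equalities), all these equalities are spanned by those coming from cover relations alone. Thus $\dim F = (a-1) - \operatorname{rank}(\text{cover equalities})$, and by Corollary~\ref{c:arithposet}(c) this rank depends only on $a$ and $k$, so one can split into cases.

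When $k = a - 1$, the Kunz poset is an antichain (Corollary~\ref{c:arithposet}(b)), so there are no cover equalities and $\dim F = a - 1$. When $k = 1$, the poset is a chain with $a - 2$ covers $[jd] \prec [(j+1)d]$, each giving an equality $x_{jd} + x_d = x_{(j+1)d}$ that introduces a fresh variable $x_{(j+1)d}$; these are independent, so $\dim F = 1$. When $k = a - 2$, the unique non-atom $[(a-1)d]$ covers each of the $a - 2$ atoms, yielding equalities $x_{yd} + x_{(a-1-y)d} = x_{(a-1)d}$ for $y = 1, \ldots, a - 2$; under the involution $y \leftrightarrow a - 1 - y$ these collapse to $\lceil (a-2)/2 \rceil$ distinct equalities, and they remain linearly independent because the involution-orbits partition the atoms into disjoint pairs. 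Hence $\dim F = (a - 1) - \lceil (a-2)/2 \rceil = \lfloor a/2 \rfloor$.

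In the main case $1 < k < a - 2$, write $g(y) := x_{yd}$ for $y \in [1, k]$. The strict inequality $k < a - 2$ guarantees row 2 of the Kunz poset has width at least 2 (if $q \geq 2$ then row 2 is full with $k \geq 2$ columns; if $q = 1$ then $r = a - 1 - k \geq 2$), so the covers $(1, y_i) \to (2, y_j)$ are available for every $y_j \in \{1, 2\}$ and $y_i \in [y_j, k]$. Taking $y_i = y_j$ gives $f(2, y_j) = g(y_j) + g(k)$, and then varying $y_i$ produces
$$g(y_j) + g(k) = g(y_i) + g(k + y_j - y_i).$$
Specializing to $y_j = 1$ yields the palindromic identity $g(y) + g(k + 1 - y) = g(1) + g(k)$ for $y \in [1, k]$; substituting this into the $y_j = 2$ version forces $g(y) - g(y - 1) = g(2) - g(1)$ for all $y \in [2, k]$, so $g$ is an arithmetic progression on $[1, k]$. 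Conversely, for any arithmetic $g$, the extension $f(x, y) := g(y) + (x - 1) g(k)$ satisfies every cover equality by direct substitution (both sides reduce to $2g(1) + (k + y_j - 2)c + (x-1)g(k)$, where $c = g(2) - g(1)$), so $F$ is parameterized by $(g(1), g(k)) \in \RR^2$ and $\dim F = 2$.

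The chief obstacle is the main case: extracting arithmetic structure on $g$ requires weaving together the palindromic identity ($y_j = 1$) with the $y_j = 2$ cover equalities, and the hypothesis $1 < k < a - 2$ must be invoked precisely to guarantee both families are present. Once arithmeticity is secured, checking that the extension $f(x,y) = g(y) + (x-1)g(k)$ satisfies every remaining cover equality in rows above row 2 is a routine substitution.
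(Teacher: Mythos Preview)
Your proof is correct and follows essentially the same route as the paper's: both split into the same four cases on $k$, both identify the affine hull of $F$ with the solution set of the facet equalities dictated by the Kunz poset, and both derive in the main case $1 < k < a-2$ that the atom coordinates $x_d, x_{2d}, \ldots, x_{kd}$ are forced into an arithmetic progression (the paper writes this as $x_2 - x_1 = x_3 - x_2 = \cdots$ after reducing to $d \equiv 1$ via Lemma~\ref{l:arithautomorphisms}). The one notable difference is the lower bound $\dim F \ge 2$: the paper exhibits three affinely independent semigroups $S$, $S'$, $S''$ inside $F$ using Corollary~\ref{c:arithposet}(c), while you instead verify that the full two-parameter family $f(x,y) = g(y) + (x-1)g(k)$ satisfies every cover equality, invoking your preliminary observation that cover equalities span all face equalities. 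Both arguments are short; yours has the mild advantage of actually describing the affine hull of $F$ explicitly, which dovetails with the ray computations in Theorem~\ref{t:arithrays}.
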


\begin{proof}
By Lemma~\ref{l:arithautomorphisms}, it suffices to assume $d \equiv 1 \bmod a$.  First, $k = 1$ implies $\mathsf e(S) = 2$, so $S$ lies on the ray $(1, 2, \ldots, a - 1)$, and if $k = a - 1$, then $S$~has maximal embedding dimension, meaning $\dim F = a - 1$.  Next, suppose $k = a - 2$.  By~Corollary~\ref{c:arithposet}, the facet equations of $F$ each have the form $x_i + x_{a-1-i} = x_{a-1}$ for $i = 1, 2, \ldots, \lfloor \tfrac{a - 1}{2} \rfloor$.  Since the matrices
$$
\begin{bmatrix}
1 & 0 & \cdots & 0 & 0 & \cdots & 0 & 1 & -1 \\
0 & 1 & \cdots & 0 & 0 & \cdots & 1 & 0 & -1 \\
\vdots & \vdots & \ddots & \vdots & \vdots & \iddots & \vdots & \vdots & \vdots \\
0 & 0 & \cdots & 1 & 1 & \cdots & 0 & 0 & -1 \\
\end{bmatrix}
\qquad \text{and} \qquad
\begin{bmatrix}
1 & 0 & \cdots & 0 & 0 & 0 & \cdots & 0 & 1 & -1 \\
0 & 1 & \cdots & 0 & 0 & 0 & \cdots & 1 & 0 & -1 \\
\vdots & \vdots & \ddots & \vdots & \vdots & \vdots & \iddots & \vdots & \vdots & \vdots \\
0 & 0 & \cdots & 1 & 0 & 1 & \cdots & 0 & 0 & -1 \\
0 & 0 & \cdots & 0 & 2 & 0 & \cdots & 0 & 0 & -1 \\
\end{bmatrix}
$$
both have full rank $\lfloor \tfrac{a-1}{2} \rfloor$, we conclude $\dim F = a - 1 - \lfloor \tfrac{a-1}{2} \rfloor = \lfloor \tfrac{a}{2} \rfloor$.  

Lastly, suppose $1 < k < a - 2$.  Again applying Corollary~\ref{c:arithposet}, 
% we obtain the matrix
% $$\begin{bmatrix}
% 1 & 0 & 0 & \cdots & 0 & 0 & 0 & \cdots & 0 & 1 & -1 & 0 \\
% 0 & 1 & 0 & \cdots & 0 & 0 & 0 & \cdots & 0 & 1 & 0 & -1 \\
% 0 & 1 & 0 & \cdots & 0 & 0 & 0 & \cdots & 1 & 0 & -1 & 0 \\
% 0 & 0 & 1 & \cdots & 0 & 0 & 0 & \cdots & 1 & 0 & 0 & -1 \\
% \vdots & \vdots & \vdots & \ddots & \vdots & \vdots & \vdots & \udots & \vdots & \vdots & \vdots & \vdots \\
% 0 & 0 & 0 & \cdots & 1 & 0 & 1 & \cdots & 0 & 0 & 0 & -1 \\
% 0 & 0 & 0 & \cdots & 1 & 1 & 0 & \cdots & 0 & 0 & -1 & 0 \\
% 0 & 0 & 0 & \cdots & 0 & 2 & 0 & \cdots & 0 & 0 & 0 & -1 \\
% \end{bmatrix}$$
% % MISSING REMAINING COLUMNS
% in the case when $a$ is odd.  
we obtain facet equations of the form
$$x_{k+1} = x_1 + x_k = x_2 + x_{k-1} = \cdots \qquad \text{and} \qquad x_{k+2} = x_2 + x_k = x_3 + x_{k-1} = \cdots.$$
Subtracting corresponding equations above yields
$$x_2 - x_1 = x_3 - x_2 = \cdots = x_{k-1} - x_{k-2} = x_k - x_{k-1},$$
meaning that the values of $x_1$ and $x_2$ determine the values for $x_3 \ldots, x_k$, and thus for the remainder of the coordinates as well.  This proves $\dim F \le 2$, and since the coordinates in $C(\ZZ_a)$ of the semigroups $S$, 
$$S' = \<a, a(h+1) + d, \ldots, a(h+1) + kd\>,
\quad \text{and} \quad
S'' = \<a, ah + (d + a), \ldots, ah + k(d + a)\>,$$
are affine independent and lie in $F$ by Corollary~\ref{c:arithposet}(c), we conclude $\dim F = 2$.  
\end{proof}

Our next result implies that, outside of the first 2 cases in Theorem~\ref{t:arithfacedim}, faces containing extra-generalized arithmetical numerical semigroups contain exclusively such semigroups.  

\begin{thm}\label{t:onlyarithfaces}
Fix an extra-generalized arithmetical numerical semigroup
$$S = \<a, ah + d, ah + 2d, \ldots, ah + kd\>.$$
If $k < a - 2$, then any numerical semigroup $S'$ with identical Kunz poset to $S$ is also an extra-generalized arithmetical numerical semigroup.  In particular, the face of $C(\ZZ_a)$ containing $S$ contains only extra-generalized arithmetical numerical semigroups.  
\end{thm}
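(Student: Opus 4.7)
The plan is to take $S'$ to be any numerical semigroup sharing a Kunz poset $P$ with $S$, read off its minimal generators from the atom structure of $P$, and show they form an arithmetic progression realizing $S'$ as extra-generalized arithmetical. By Theorem~\ref{t:groupconefacelattice}(b), $P$ has trivial Kunz subgroup, so $S'$ has multiplicity $a$. By Theorem~\ref{t:arithposet}(a) and Remark~\ref{r:arithposetdrawing}, the atoms of $P$ are exactly the residues $[d], [2d], \ldots, [kd] \in \ZZ_a$ forming the bottom row of the grid. Write $A_i$ for the Ap\'ery element of $S'$ at residue $[id]$ and set $n_i := A_i$ for $i = 1, \ldots, k$; these $n_i$ are exactly the minimal generators of $S'$ other than $a$. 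The case $k = 1$ is immediate (take $d' = n_1 - a$ and $h' = 1$), so I assume $k \ge 2$ from here on.

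The heart of the argument, and what I expect to be the main obstacle, is showing $n_1, \ldots, n_k$ is an arithmetic progression. By Theorem~\ref{t:arithposet}(c), the second-row element $[(k+j)d] \in P$ covers precisely the first-row elements $[\ell d]$ for $\ell \in \{j, \ldots, k\}$, with difference $[(k+j-\ell)d]$ itself an atom. Translating cover relations to Ap\'ery elements of $S'$ yields the system $A_{k+j} = n_\ell + n_{k+j-\ell}$ over $\ell \in \{j, \ldots, k\}$; subtracting consecutive instances gives $n_{\ell+1} - n_\ell = n_{k+j-\ell} - n_{k+j-\ell-1}$. Here the hypothesis $k < a - 2$ enters crucially: it forces $k + 2 \le a - 1$, so both $j = 1$ and $j = 2$ produce valid relations, and combining them inductively collapses every consecutive difference $n_{i+1} - n_i$ to the single value $d' := n_k - n_{k-1}$. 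The delicate part is confirming that exactly these two values of $j$ suffice to collapse all the differences, which is also why the argument breaks at $k = a - 2$ (the second row of the grid is then truncated and only $j = 1$ remains).

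Finally, setting $h' := (n_1 - d')/a$, I would verify that $S' = \<a, ah' + d', \ldots, ah' + kd'\>$ satisfies the hypotheses of Definition~\ref{d:extragenarith}. Integrality of $h'$ and $\gcd(a, d') = 1$ follow from $d' \equiv d \bmod a$, and $ah' + kd' = n_k > a$ holds since $n_k$ is a minimal generator of $S'$ distinct from $a$. The remaining inequality $h' \ge 1$ splits on the sign of $d'$: if $d' < 0$ then $n_1 - d' > n_1 > a$; if $d' > 0$ then $2n_1 \in S'$ lies at residue $[2d]$, so $n_2 \le 2n_1$ by minimality of $n_2$, with strict inequality (else $n_2 = 2n_1$ contradicts minimality of $n_2$ as a generator), giving $n_1 > d'$ and hence, using $n_1 - d' \equiv 0 \bmod a$, the desired $n_1 - d' \ge a$.
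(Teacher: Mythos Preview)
Your argument is correct and follows essentially the same route as the paper's proof: both use the two second-row Ap\'ery elements $A_{k+1}$ and $A_{k+2}$ (available precisely because $k < a-2$) to force all consecutive differences $n_{i+1}-n_i$ to coincide, then write $n_1 = ah' + d'$. Your treatment is in fact more careful than the paper's at the end: the paper simply asserts ``we can write $n_1 = ah' + d'$ for some $h' \ge 1$'' without justification, whereas you explicitly verify $h' \ge 1$ via the sign split on $d'$ and check the remaining conditions of Definition~\ref{d:extragenarith}.
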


\begin{proof}
If $S$ and $S'$ have identical Kunz poset $P$, then $m(S') = a$ is the number of elements of $P$ and $e(S') = k+1$ is one more than the number of atoms of $P$ by Theorem~\ref{t:groupconefacelattice}(c), so write $S' = \<a, n_1, \ldots, n_k\>$ with each $n_i \equiv di \bmod a$.  Since $k < a - 2$, $\Ap(S';a)$ has at least 2 more elements $a_{k+1} \equiv (k+1)d \bmod a$ and $a_{k+2} \equiv (k+2)d \bmod a$.  By~Theorem~\ref{t:arithposet}(c), 
$$a_{k+1} = n_1 + n_k = n_2 + n_{k-1} = \cdots \qquad \text{and} \qquad a_{k+2} = n_2 + n_k = n_3 + n_{k-1} = \cdots$$
must all hold.  Let $d' = a_{k+2} - a_{k+1}$.  Subtracting corresponding equations above yields
$$d' = a_{k+2} - a_{k+1} = n_2 - n_1 = n_3 - n_2 = \cdots$$
as well as
$$d' = a_{k+2} - a_{k+1} = n_k - n_{k-1} = n_{k-1} - n_{k-2} = \cdots.$$
If $k = 2j$ is even, then $a_{k+2} = 2n_{j+1} = n_j + n_{j+2}$ and $a_{k+1} = n_j + n_{j+1}$, so $d' = n_j - n_{j-1}$ and $d' = n_{j+1} - n_j$ both appear above.  
If $k = 2j - 1$ is odd, then $a_{k+2} = n_j + n_{j+1}$ and $a_{k+1} = 2n_j = n_{j-1} + n_{j+1}$, so again $d' = n_j - n_{j-1}$ and $d' = n_{j+1} - n_j$ both appear~above.  
Putting everything together, we must have $d \equiv d' \bmod a$, so we can write $n_1 = ah' + d'$ for some $h' \ge 1$, and thus each $n_i = ah' + id'$, as desired.  
\end{proof}

\begin{example}\label{e:arithonly}
The hypothesis $k < a - 2$ is necessary in Theorem~\ref{t:onlyarithfaces}.  If $k = a - 1$, such as for the semigroup $S = \<6, 7, 8, 9, 10, 11\>$, then $S$ is max embedding dimension, so there are ample examples of other numerical semigroups (e.g., $S_1' = \<6,8,10,13,15,17\>$) with identical Kunz poset (depicted in Figure~\ref{f:nonarith1}).  If $k = a - 2$, such as for the numerical semigroup $S_2 = \<6,13,14,15,16\>$, then $S$ shares its Kunz poset (depicted in Figure~\ref{f:nonarith2}) with the semigroup $S_2' = \<6,15,16,19,20\>$.  
\end{example}

\begin{figure}[t!]
\begin{center}
\begin{subfigure}[t]{0.30\textwidth}
\begin{center}
\includegraphics[width=1.2in]{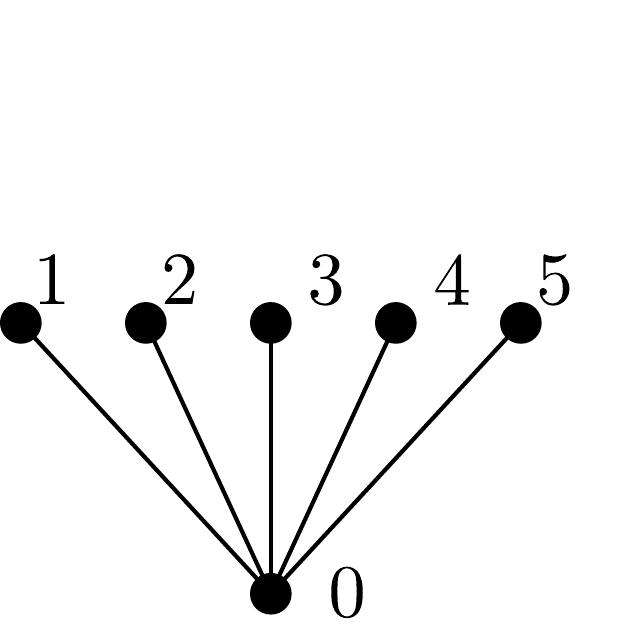}
\end{center}
\caption{}
\label{f:nonarith1}
\end{subfigure}
\hspace{0.02\textwidth}
\begin{subfigure}[t]{0.30\textwidth}
\begin{center}
\includegraphics[width=1.2in]{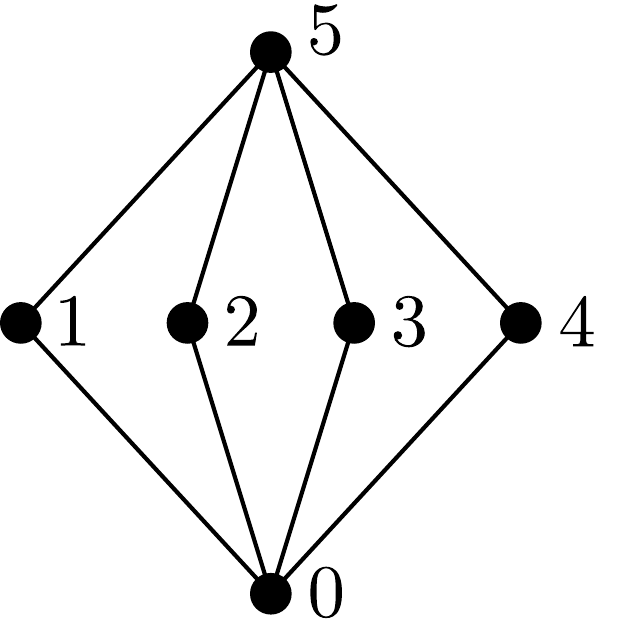}
\end{center}
\caption{}
\label{f:nonarith2}
\end{subfigure}
\hspace{0.02\textwidth}
\begin{subfigure}[t]{0.30\textwidth}
\begin{center}
\includegraphics[width=1.2in]{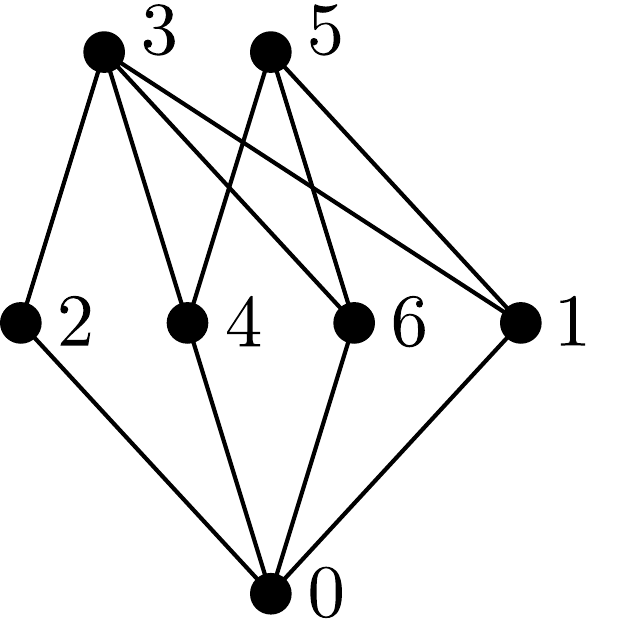}
\end{center}
\caption{}
\label{f:nonarith3}
\end{subfigure}
\end{center}
\caption{Kunz posets for $S_1$ and $S_1'$ (left) and for $S_2$ and $S_2'$ (middle) from Example~\ref{e:arithonly}, along with the Kunz poset of $S_3 = \<7,23,25,27,29\>$ to which Theorem~\ref{t:onlyarithfaces} does apply.}
\end{figure}

\begin{cor}\label{c:arithalsofaces}
Any face with an extra-generalized arithmetical numerical semigroup 
$$S = \<a, ah + d, ah + 2d, \ldots, ah + kd\>$$
in its interior also contains infinitely many arithmetical numerical semigroups.  
\end{cor}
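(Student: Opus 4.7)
The plan is to invoke Corollary~\ref{c:arithposet}(c) to produce infinitely many arithmetical numerical semigroups with the same Kunz poset as $S$, and then appeal to the combinatorial characterization of faces (Theorem~\ref{t:groupconefacelattice}(b), together with the fact cited from \cite{wilfmultiplicity} that numerical semigroups lying in the interior of a common face of $\mathcal{C}(\ZZ_a)$ are precisely those sharing the same Kunz poset) to conclude they lie in the interior of the face containing $S$.

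First, I would fix the target parameters $h' = 1$ and, for each integer $N \ge 0$, set $d_N = d + Na$. For $N$ large enough we have $d_N \ge 1$, and of course $d_N \equiv d \pmod{a}$ and $\gcd(a,d_N)=\gcd(a,d)=1$. The candidate
$$S_N = \langle a,\ a+d_N,\ a+2d_N,\ \ldots,\ a+kd_N\rangle$$
then satisfies every hypothesis of Definition~\ref{d:extragenarith} with $h'=1$ (in particular $a + kd_N > a$), so provided its stated generating set is minimal, $S_N$ is an arithmetical numerical semigroup in the classical sense. Minimality follows from Proposition~\ref{p:arithmembercrit}(b) applied with $h'=1$: for each $i=1,\ldots,k$ we have $\lceil i/k\rceil \cdot 1 = 1$, so $a+id_N \in \Ap(S_N;a)$, and none of these generators are redundant.

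Next, because $S$ and $S_N$ share the values of $a$, of $k$, and of $d\bmod a$, Corollary~\ref{c:arithposet}(c) gives them a common Kunz poset. By the combinatorial description of faces of $\mathcal{C}(\ZZ_a)$, $S_N$ therefore lies in the interior of the same face as $S$. Since distinct $N$ yield distinct $d_N$ and hence distinct (minimal) generating sets, the family $\{S_N\}_{N\gg 0}$ is an infinite collection of arithmetical numerical semigroups in the interior of the face containing $S$.

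The only real obstacle is the bookkeeping needed to confirm that the chosen $(h',d_N)$ produce a genuinely arithmetical semigroup with the intended multiplicity $a$ and embedding dimension $k+1$; once Proposition~\ref{p:arithmembercrit}(b) is invoked this is immediate, so the argument reduces almost entirely to Corollary~\ref{c:arithposet}(c) plus the faces-versus-posets dictionary from Section~\ref{sec:polyhedra}.
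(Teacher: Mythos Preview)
Your proposal is correct and follows essentially the same approach as the paper: the paper's proof simply applies Theorem~\ref{t:arithposet} to $S$ and to $S' = \langle a, a+d', \ldots, a+kd'\rangle$ for any positive $d' \equiv d \bmod a$, which is exactly your family $\{S_N\}$ with $h'=1$, invoking the same Kunz-poset invariance (your Corollary~\ref{c:arithposet}(c)) to place all of them on the same face. Your additional verification of minimality via Proposition~\ref{p:arithmembercrit}(b) is fine but unnecessary, since the hypotheses of Definition~\ref{d:extragenarith} (in particular $ah'+kd_N > a$, automatic when $d_N \ge 1$) already guarantee it per the discussion in Example~\ref{e:extragenarith}.
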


\begin{proof}
This follows from applying Theorem~\ref{t:arithposet} to $S$ and $S' = \<a, a + d', \ldots, a + kd'\>$ where $d'$ is any positive integer with $d' \equiv d \bmod a$.  
\end{proof}

There is still no known classification of the extremal rays of group cones.  The~machinery developed in~\cite{kunzfaces1} yields a method to prove that a given ray is indeed a ray, but proving that a given list of rays is complete, even for a particular face, is a much more difficult task.  However, it is a fact from polyhedral geometry (see~\cite{ziegler}, for instance) that any 2-dimensional face of a pointed cone has exactly 2 extremal rays.  As our final result in this section, we characterize both bounding rays of the 2-dimensional faces of the group cone containing extra-generalized arithmetical numerical semigroups.  

\begin{thm}\label{t:arithrays}
Fix an extra-generalized arithmetical numerical semigroup 
$$S = \<a, ah + d, ah + 2d, \ldots, ah + kd\>,$$
and suppose $1 < k < a - 2$.  The extremal rays of the face $F \subset \mathcal C(\ZZ_a)$ containing $S$ are spanned by the following:
\begin{enumerate}[(i)]
\item 
the vector $\vec r$ that, when added to the Kunz coordinates of $S$ in $P_a$, yields 
$$\<a, ah + (d + a), \ldots, ah + k(d + a)\>;$$
and

\item 
the vector $\vec t$ that, when added to the Kunz coordinates of $S$ in $P_a$, yields
$$\<a, a(h + a) + (d - a\lfloor a/k \rfloor), \ldots, a(h + a) + k(d - a\lfloor a/k \rfloor)\>.$$

\end{enumerate}
The ray $\vec r$ contains all of the numerical semigroups $\<a, b\>$ for positive $b \equiv d \bmod a$, and the ray $\vec t$ contains numerical semigroups if $k \mid (a - 1)$, in which case those semigroups have the form $\<a, b\>$ for some positive $b \equiv kd \bmod a$.  
\end{thm}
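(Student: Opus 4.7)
The plan is to use $\dim F = 2$ from Theorem~\ref{t:arithfacedim}, which implies $F$ has exactly two extremal rays, so it suffices to verify that each of $\vec r$ and $\vec t$ spans a one-dimensional subface. By Corollary~\ref{c:arithposet}(c), the semigroups $S_r$ and $S_t$ obtained by the operations in (i) and (ii) share a Kunz poset with $S$ (the residue of $d'$ modulo $a$ and the value of $k$ are unchanged), and the minimality condition $ah' + kd' > a$ persists by direct check, so both lie in $F$. Setting $\mu = \lfloor a/k \rfloor$ and letting $m_i \in [1, a-1]$ denote the residue of $id^{-1}$ modulo $a$, Theorem~\ref{t:arithposet}(a) yields
$$\vec r_i = m_i \qquad\text{and}\qquad \vec t_i = x_i a - m_i \mu.$$
Setting $\vec t = c \vec r$ and comparing entries at $i = d$ and $i = kd$ gives $a(k-1) = 0$, so $\vec r$ and $\vec t$ are linearly independent.

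To identify $\vec r$, observe that each semigroup $\<a, b\>$ with positive $b \equiv d \bmod a$ and $b > a$ is two-generated arithmetical, so by the $k = 1$ case of Theorem~\ref{t:arithfacedim} all such semigroups share a common one-dimensional face of $\mathcal{C}(\ZZ_a)$. Its Kunz poset is the chain $[0] \prec [d] \prec [2d] \prec \cdots \prec [(a-1)d]$, which forces $x_{[md]} = mc$. Every facet equation of $F$ corresponds by Theorem~\ref{t:arithposet}(c) to a cover of the form $[m_i d] \prec [(m_i + m')d]$ with atomic difference $m' \in [1, k]$, giving the equation $x_{[m_i d]} + x_{[m' d]} = x_{[(m_i + m') d]}$, which on the chain reduces to the tautology $m_i c + m' c = (m_i + m') c$. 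Hence this ray lies inside $F$, making it an extremal ray, and the stated correspondence on integer points with $\<a, b\>$ follows by comparing Kunz tuples.

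To identify $\vec t$ as the other extremal ray, I would exhibit an additional facet inequality of $\mathcal{C}(\ZZ_a)$ that is tight on $\vec t$ but strict on $F$'s relative interior. With $\rho = a - k\mu \in [0, k)$ and $(i, j) = (kd, (a+1-k)d)$, direct computation yields
$$\vec t_{kd} + \vec t_{(a+1-k)d} = \rho + \mu(k-1) = a - \mu = \vec t_d,$$
so the equation $x_{kd} + x_{(a+1-k)d} = x_d$ is active on $\vec t$ (note $kd + (a+1-k)d \equiv d \bmod a$). By Theorem~\ref{t:groupconefacelattice}(a), this equation holds on $F$ iff $[kd] \preceq [d]$ in $S$'s Kunz poset, but that fails since $[kd]$ and $[d]$ are both atoms of height $1$ yet distinct. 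Thus $\vec t$ lies on a strictly smaller face of $F$, which by the dimension count is the remaining extremal ray. When $k \mid (a-1)$, we have $\gcd(k,a)=1$ and the chain $[0] \prec [kd] \prec [2kd] \prec \cdots$ cycles through all of $\ZZ_a$, so the chain-refinement argument places $\<a, b\>$ for $b \equiv kd \bmod a$ on $\vec t$. The main obstacle will be producing the explicit extra equation in the third step and confirming it cuts $F$ down to dimension exactly one, which requires careful bookkeeping with Theorem~\ref{t:arithposet}(c) in both the generic case and the boundary case $\rho = 0$ (that is, $k \mid a$).
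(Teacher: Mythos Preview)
Your strategy matches the paper's: since $\dim F = 2$ by Theorem~\ref{t:arithfacedim}, the face $F$ has exactly two extremal rays, and it suffices to place each of $\vec r$ and $\vec t$ on a proper one-dimensional subface.  Your treatment of $\vec r$ is correct.  Moreover, your extra facet equation for $\vec t$, namely $x_{kd} + x_{(a+1-k)d} = x_d$, actually works uniformly: your computation $\vec t_{kd} + \vec t_{(a+1-k)d} = \rho + \mu(k-1) = a - \mu = \vec t_d$ is valid regardless of whether $\rho = 0$, so the ``main obstacle'' you anticipate (the boundary case $k \mid a$) is already handled.  The paper, by contrast, splits into two cases, using $t_k = 0$ when $k \mid a$ and a different equation $t_k + t_{qk} = t_{(q+1)k}$ when $k \nmid a$; your single equation is cleaner.

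There is, however, a genuine gap in the argument for $\vec t$.  From the fact that $S$ and $S_t$ both lie in the relative interior of $F$ you may conclude only that $\vec t \in \operatorname{span}(F)$, not that $\vec t \in F$.  Satisfying an extra facet equation then places $\vec t$ on the line $H \cap \operatorname{span}(F)$, but in a non-simplicial cone this line can meet $F$ only at the origin, in which case neither $\vec t$ nor any positive multiple lies in $F$, and the conclusion that ``$\vec t$ lies on a strictly smaller face of $F$'' is unjustified.  Knowing $\ell(\vec r) > 0$ and $\ell(\vec t) = 0$ does not by itself force $\ell$ to vanish on the second extremal ray.

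The paper closes this gap via Theorem~\ref{t:onlyarithfaces}: every integer point of the corresponding face of $P_a$ is the Kunz tuple of an extra-generalized arithmetical semigroup with the same parameters $(a,k,d \bmod a)$, so adding $\vec t$ to \emph{any} such point (not just to the Kunz tuple of the particular $S$) yields another such semigroup and hence remains in $F$.  Iterating and using convexity shows $\vec t$ lies in the recession cone of the $P_a$-face, which is precisely $F \subset \mathcal C(\ZZ_a)$.  Your check that the single semigroup $S_t$ shares the Kunz poset of $S$ is not enough; you need this for all semigroups in $F$, and that is exactly the content of Theorem~\ref{t:onlyarithfaces}.  Once $\vec t \in F$ is established, your extra equation immediately forces $\vec t$ onto a proper subface, and linear independence from $\vec r$ finishes the proof.
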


\begin{proof}
By Lemma~\ref{l:arithautomorphisms}, it suffices to assume $d = 1$.  Under this assumption, it is easy to check that $\vec r = (1, 2, \dots, a - 1)$, and that $\vec t$ is given by
$$t_i = x_ia - ((x_i - 1)k + y_i) \lfloor a/k \rfloor,$$
% $(a - \lfloor a/k \rfloor, \ldots, a - k\lfloor a/k \rfloor, a - )$
with $x_i$ and $y_i$ defined as in Theorem~\ref{t:arithposet}(a).  Note each $t_i$ is surely non-negative, as 
$$((x_i - 1)k + y_i) \lfloor a/k \rfloor \le x_ik \lfloor a/k \rfloor \le x_ia.$$
Since every integer point in $F$ is the Kunz tuple of some extra-generalized arithmetical numerical semigroup by Theorem~\ref{t:onlyarithfaces}, adding $\vec r$ or $\vec t$ to any integer point in $F$ yields another point in $F$, so the rays $\vec r$ and $\vec t$ both lie in~$F$.  

Now, the coordinates of $\vec r$ satisfy the equation $2r_1 = r_2$, which is not satisfied by the Ap\'ery coordinates of $S$ since $k > 1$.  To prove that $\vec t$ is also an extremal ray, we~consider two cases.  First, if $k \mid a$, then 
$$t_k = a - k \lfloor a/k \rfloor = 0,$$
so $\vec t$ lies in a face with nontrivial Kunz subgroup.  Otherwise, we can write $a - 1 = qk + r$ with $q = \lfloor a/k \rfloor$ and $0 \le r \le k - 2$ and obtain
$$t_{(q+1)k}
= t_{k-(r+1)}
= a - (k - (r + 1))q
= a - (k - (a - qk))q
= (a - kq) + (qa - q^2k)
= t_k + t_{qk}$$
as a facet equation not satisfied by the Ap\'ery coordinates of $S$.  
As such, we conclude $\vec r$ and $\vec t$ both lie in proper faces of $F$, which necessarily are rays since $\dim F = 2$.  

For the final claims, it is clear that $\vec r = (1, 2, \ldots, a - 1)$ contains semigroups of the form $\<a, b\>$ for $b \equiv 1 \bmod a$ since the Kunz poset is a total ordering.  Likewise, if $a - 1 = qk$ for $q \in \ZZ_{\ge 0}$, then $t_k = 1$ and 
$$t_{k-i} = a - (k - i)q = 1 + iq$$
for each $i = 1, \ldots, k - 1$, so the Kunz poset of $\vec t$ is also a total ordering with unique atom $k$.  This completes the proof.  
\end{proof}

\begin{example}\label{e:arithrays}
The Kunz poset of $S = \<13,14,15,16,17\>$ is depicted in Figure~\ref{f:arithraysboth}, along with the posets of its bounding rays in Figures~\ref{f:arithray1} and~\ref{f:arithray2}.  By Theorem~\ref{t:arithrays}, the Kunz poset of the first ray will always be a total order, obtained by ``reading across the rows'' of the Kunz poset for $S$.  For this particular semigroup $k \mid (a - 1)$, so the latter ray is also a total order, obtained by reading ``bottom to top and right to left'' in the Kunz poset of $S$.  The condition $k \mid (a - 1)$ holds whenever the top row of the Kunz poset has the full $k$ elements.  
\end{example}

\begin{example}\label{e:arithbadray}
When $k \nmid (a-1)$, the Kunz poset of the ray $\vec t$ in Theorem~\ref{t:arithrays} can be substantially more complicated.  One such example, for $S = \<16,23,30,37,44,51,58\>$, is depicted in Figure~\ref{f:arithraybad}.  Generally speaking, the ray $\vec t$ often does not contain numerical semigroups, and sometimes corresponds to a nontrivial subgroup (for instance, it is not hard to show using Theorem~\ref{t:arithrays} that this happens if $k \mid a$).  
\end{example}

\begin{figure}[t!]
\begin{center}
\begin{subfigure}[t]{0.15\textwidth}
\begin{center}
\includegraphics[height=2.5in]{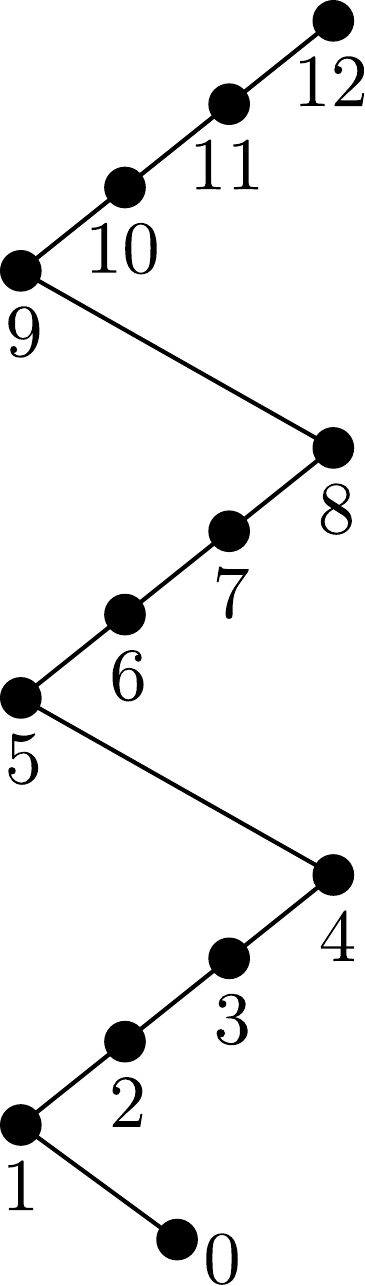}
\end{center}
\caption{}
\label{f:arithray1}
\end{subfigure}
\hspace{0.02\textwidth}
\begin{subfigure}[t]{0.20\textwidth}
\begin{center}
\includegraphics[height=2.5in]{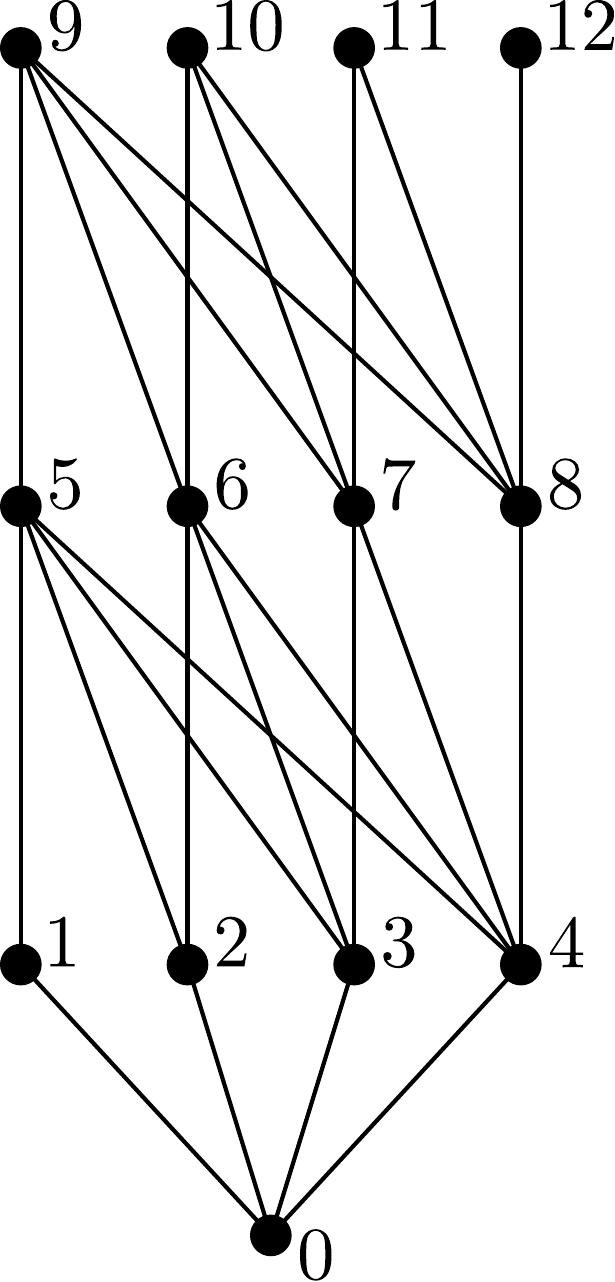}
\end{center}
\caption{}
\label{f:arithraysboth}
\end{subfigure}
\hspace{0.02\textwidth}
\begin{subfigure}[t]{0.15\textwidth}
\begin{center}
\includegraphics[height=2.5in]{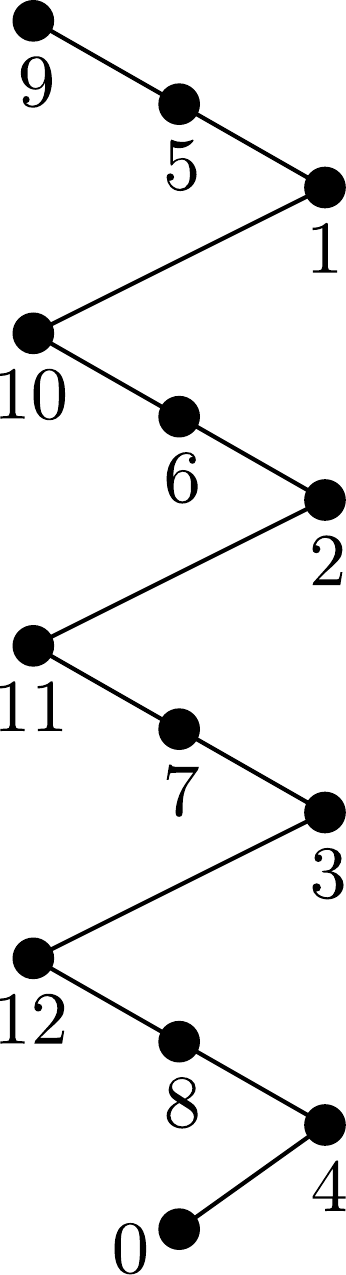}
\end{center}
\caption{}
\label{f:arithray2}
\end{subfigure}
\hspace{0.10\textwidth}
\begin{subfigure}[t]{0.30\textwidth}
\begin{center}
\includegraphics[height=2.5in]{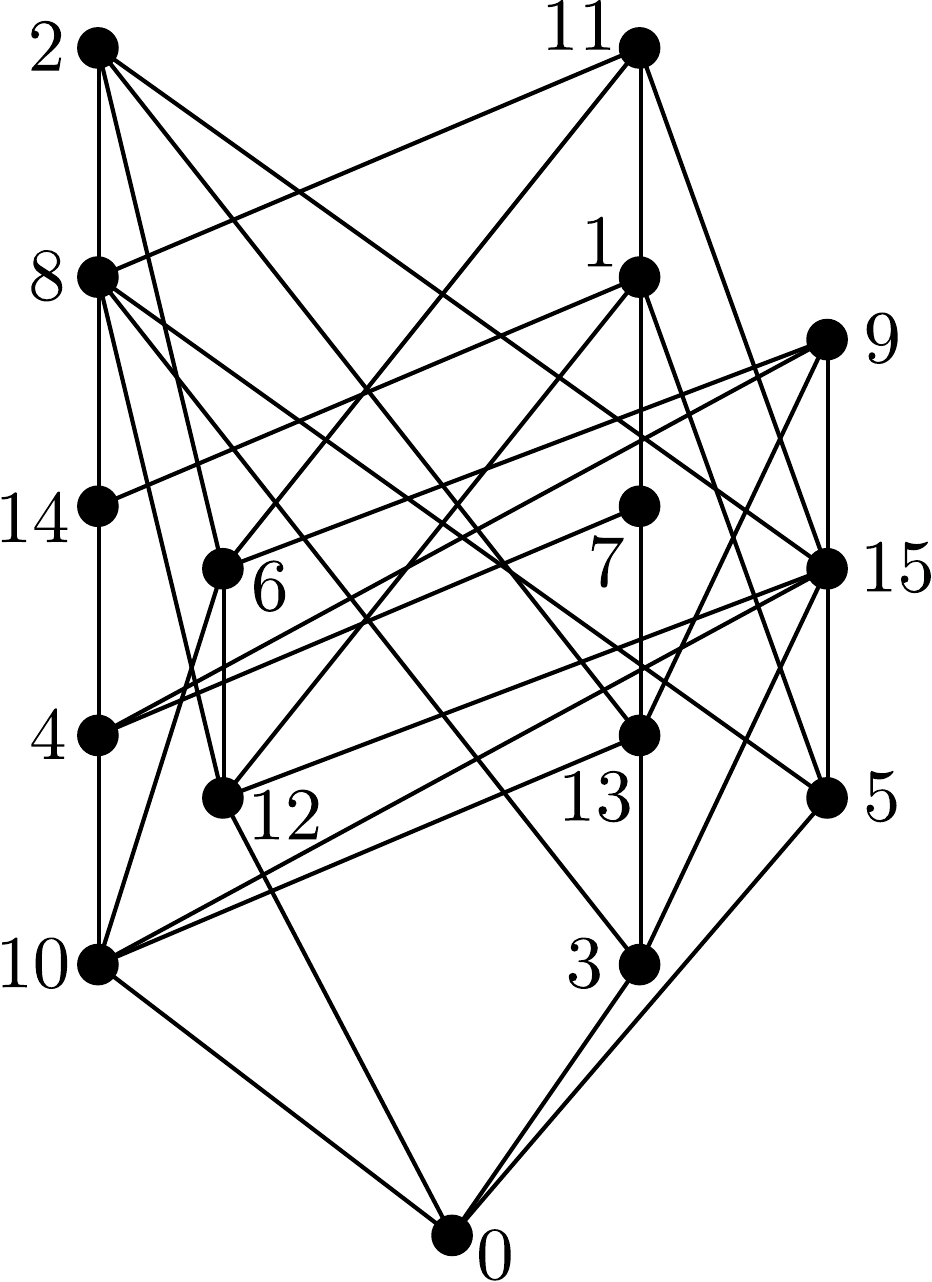}
\end{center}
\caption{}
\label{f:arithraybad}
\end{subfigure}
\end{center}
\caption{The Kunz poset of $S = \<13,14,15,16,17\>$ ((b) above) and its two bounding rays ((a) and (c) above), as in Example~\ref{e:arithrays}.  The final Kunz poset is that of a bounding ray for $S = \<16,23,30,37,44,51,58\>$ discussed in Example~\ref{e:arithbadray}.}
\end{figure}

\begin{remark}\label{r:remainingarithrays}
The remaining faces of $C(\ZZ_a)$ containing extra-generalized arithmetical numerical semigroups have substantially more extremal rays than those described in Theorem~\ref{t:arithrays}.  Indeed, if $a = 19$ and $k = 17$, then each such face has $726$ rays, while $P_{19}$ itself has a grand total of $11$,$665$,$781$ rays~\cite{wilfmultiplicity}.  
\end{remark}

%%%%%%%%%%%%%%%%%%%%%%%%%%%%%%%%%%%%%%%%%%%%%%%%%%%%%%%%%%%%%%%%%%%%%%%%%
\section{Posets of monoscopic numerical semigroups}%%%%%%%%%%%%%%%%%%%%%%
\label{sec:monoscopic}%%%%%%%%%%%%%%%%%%%%%%%%%%%%%%%%%%%%%%%%%%%%%%%%%%%
%raggedbottom%%%%%%%%%%%%%%%%%%%%%%%%%%%%%%%%%%%%%%%%%%%%%%%%%%%%%%%%%%%%

In this section, we introduce monoscopic numerical semigroups (Definition~\ref{d:monoscopic}), and extend a known characterization of the Ap\'ery set of monoscopic numerical semigroups to a characterization of their Ap\'ery posets (Theorem~\ref{t:monoscopicposet}) and Kunz posets (Corollary~\ref{c:monoscopicposet}).  

\begin{defn}\label{d:monoscopic}
Fix a numerical semigroup $S = \<n_1, \ldots, n_k\>$, an integer $\beta \in \ZZ_{\ge 2}$, and an element $\alpha \in S \setminus \{n_1, \ldots, n_k\}$ with $\gcd(\alpha, \beta) = 1$.  The semigroup 
$$T = \<\alpha\> + \beta S = \<\alpha, \beta n_1, \ldots, \beta n_k\>$$
is called a \emph{monoscopic gluing} of $S$, or simply \emph{monoscopic}.  
\end{defn}

It is well known~\cite[Lemma~8.8]{numerical} that under the given conditions, the generating set for $T$ given in Definition~\ref{d:monoscopic} is minimal.  

\begin{thm}[{\cite[Theorem~8.2]{numerical}}]\label{t:monoscopicaperyset}
Suppose $S = \<m, n_2, \ldots, n_k\>$ and that $T = \<\alpha\> + \beta S$ is a monoscopic gluing.  We have 
$$\Ap(T;\beta m) = \{b\alpha + a\beta : a \in \Ap(S;m) \text{ and } 0 \le b \le \beta - 1\}.$$
\end{thm}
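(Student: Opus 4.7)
The plan is to prove the theorem by establishing a canonical representation for elements of $T$ and translating Apéry membership through it. Specifically, I first show that every $t \in T$ can be written uniquely as $t = b\alpha + \beta s$ with $0 \le b \le \beta - 1$ and $s \in S$. For existence, write $t = c\alpha + \beta s'$ with $c \in \ZZ_{\ge 0}$ and $s' \in S$ (possible since $T = \<\alpha, \beta n_1, \ldots, \beta n_k\>$), then let $c = q\beta + b$ with $0 \le b < \beta$ to obtain $t = b\alpha + \beta(q\alpha + s')$; note $q\alpha + s' \in S$ because $\alpha \in S$ by hypothesis. For uniqueness, if $b_1\alpha + \beta s_1 = b_2\alpha + \beta s_2$ with $0 \le b_1, b_2 < \beta$ and $s_1, s_2 \in \ZZ$, then $\beta \mid (b_1 - b_2)\alpha$ together with $\gcd(\alpha, \beta) = 1$ forces $b_1 = b_2$ and hence $s_1 = s_2$; crucially, this purely algebraic uniqueness does not require $s_1, s_2 \in S$.

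With the canonical form in hand, I next show that $t \in \Ap(T; \beta m)$ if and only if its canonical representation $t = b\alpha + \beta s$ has $s \in \Ap(S; m)$. If $s - m \in S$, then $t - \beta m = b\alpha + \beta(s - m)$ is already a valid expression in the defining generators and so lies in $T$, giving $t \notin \Ap(T; \beta m)$. Conversely, assume $s \in \Ap(S; m)$, so $s - m \notin S$. If $t - \beta m < 0$, then $t - \beta m \notin T$ trivially. Otherwise, suppose toward contradiction that $t - \beta m$ has a canonical representation $b''\alpha + \beta s''$ with $0 \le b'' < \beta$ and $s'' \in S$; then applying the algebraic uniqueness to $b\alpha + \beta(s - m) = b''\alpha + \beta s''$ yields $s'' = s - m$, contradicting $s - m \notin S$.

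The theorem then follows by observing that the map $(b, a) \mapsto b\alpha + a\beta$ on $\{0, \ldots, \beta - 1\} \times \Ap(S; m)$ is injective by the canonical form uniqueness, and by the equivalence just proved its image is exactly $\Ap(T; \beta m)$. A cardinality check $|\Ap(T; \beta m)| = \beta m = \beta \cdot |\Ap(S; m)|$ provides reassuring confirmation. The main subtlety I anticipate is that in applying uniqueness to $b\alpha + \beta(s - m) = b''\alpha + \beta s''$, the quantity $s - m$ can be negative, so one cannot invoke a uniqueness statement phrased for the semigroup; this is precisely why I isolate uniqueness as a purely arithmetic statement about integers, which sidesteps the obstacle cleanly.
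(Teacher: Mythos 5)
Your proof is correct. Note, however, that the paper does not prove this statement at all: it is quoted verbatim from \cite[Theorem~8.2]{numerical} (Rosales--Garc\'ia-S\'anchez), so there is no in-paper argument to compare against. Your self-contained proof is sound and follows the standard line of reasoning for Ap\'ery sets of gluings: the canonical representation $t = b\alpha + \beta s$ with $0 \le b \le \beta - 1$ and $s \in S$ (existence using $\alpha \in S$, which is guaranteed by the definition of a monoscopic gluing), the purely arithmetic uniqueness from $\gcd(\alpha,\beta)=1$, and the transfer of the Ap\'ery condition from $T$ modulo $\beta m$ down to $S$ modulo $m$. You correctly identify and handle the one genuine subtlety --- that $s - m$ may fail to lie in $S$ (indeed must fail to, in the relevant case), so the uniqueness statement has to be formulated for arbitrary integers rather than for semigroup elements. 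The textbook proof is typically organized as a counting argument (the $\beta m$ candidate elements lie in $T$ and occupy distinct residue classes modulo $\beta m$, and each is minimal in its class), which is essentially a repackaging of the same two ingredients; your version makes the minimality direction explicit rather than deducing it from cardinality, and the cardinality check you include at the end is, as you say, only a consistency check and not needed for the argument.
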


\begin{example}\label{e:monoscopicextensions}
Let $S = \<4, 13, 18\>$, and consider the monoscopic gluings 
$$T_1 = \<43\> + 3S = \<12, 39, 43, 54\> \qquad \text{and} \qquad T_2 = \<31\> + 3S = \<12, 31, 39, 54\>$$
of $S$.  Although the Ap\'ery sets of $T_1$ and $T_2$ have identical structure by Theorem~\ref{t:monoscopicaperyset}, and have nearly identical Kunz posets, as depicted in Figure~\ref{f:monoscopicextensions}, there is a subtle distinction.  The key turns out to be that $31 \in \Ap(S;4)$, meaning $\alpha = 31$ is the smallest possible value $\alpha$ can take in its equivalence class modulo $12$.  When we examine where monoscopic semigroups lie in the Kunz polyhedron in Section~\ref{sec:monoscopicembeddings}, we will see that varying $\alpha$ within its equivalence class modulo $\beta m$ yields semigroups within the same face, unless $\alpha \in \Ap(S;m)$, which places the resulting semigroup on a boundary face.  
\end{example}

\begin{figure}[t!]
% \begin{center}
% \begin{subfigure}[t]{0.45\textwidth}
\begin{center}
\includegraphics[width=2.0in]{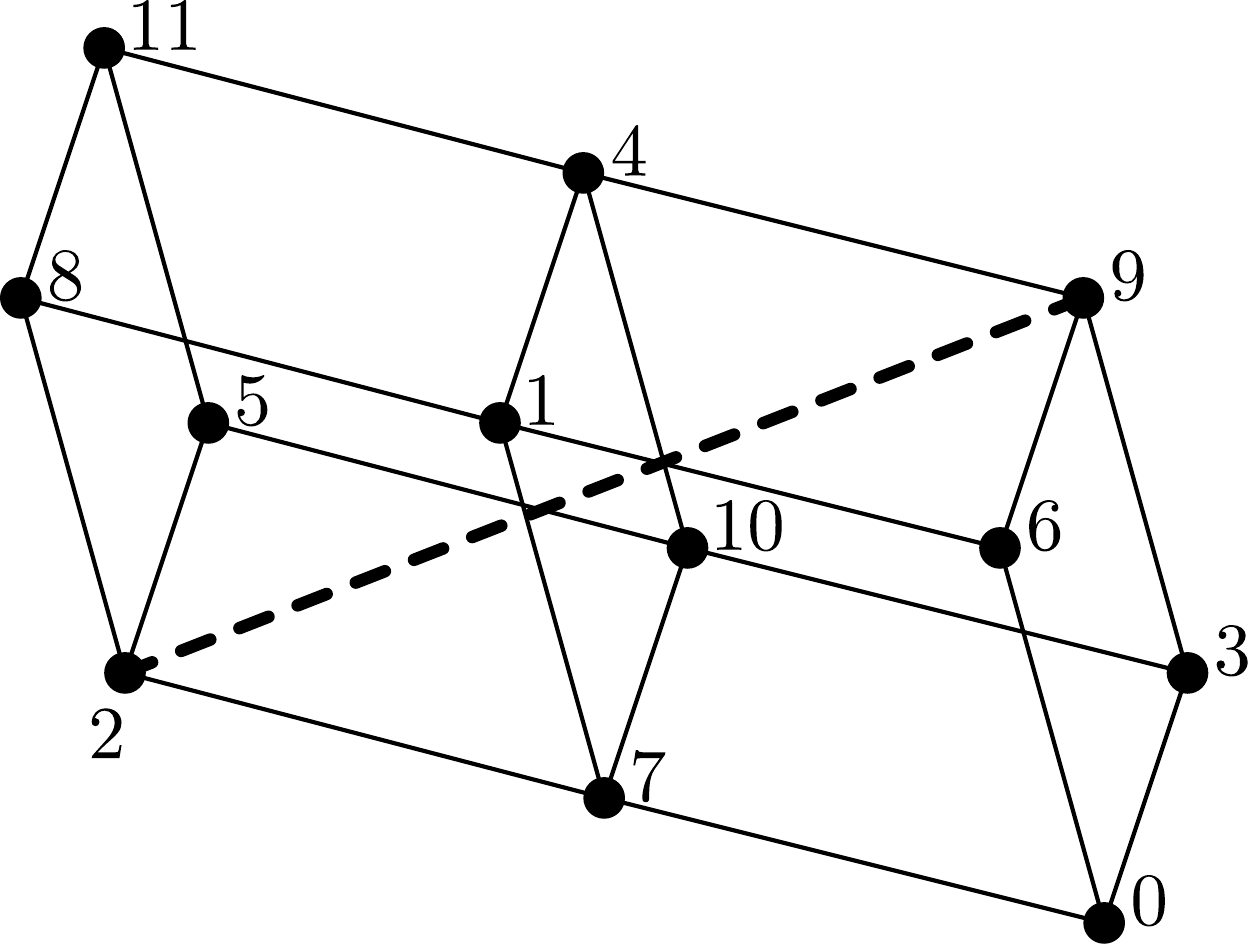}
\end{center}
% \caption{}
% \label{f:monoscopicaug}
% \end{subfigure}
% \hspace{0.02\textwidth}
% \begin{subfigure}[t]{0.45\textwidth}
% \begin{center}
% \includegraphics[width=1.5in]{posets-tele-extaug.pdf}
% \end{center}
% \caption{}
% \label{f:monoscopicaug}
% \end{subfigure}
% \hspace{0.02\textwidth}
% \begin{subfigure}[t]{0.30\textwidth}
% \begin{center}
% \includegraphics[width=1.2in]{posets-arith-embdimex3.pdf}
% \end{center}
% \caption{}
% \label{f:nonarith3}
% \end{subfigure}
% \end{center}
\caption{The Kunz poset for $T_1 = \<12, 39, 43, 54\>$ (without the dashed edge) and $T_2 = \<12, 31, 39, 54\>$ (with the dashed edge) from Example~\ref{e:monoscopicextensions}, both of which are monoscopic gluings of $S = \<4, 13, 18\>$.}
\label{f:monoscopicextensions}
\end{figure}

\begin{thm}\label{t:monoscopicposet}
Let $S = \<m, n_2, \ldots, n_k\>$.  Suppose $T = \<\alpha\> + \beta S$ is a monoscopic gluing, and fix $b\alpha + a\beta$, $b'\alpha + a'\beta \in \Ap(T;\beta m)$.  
\begin{enumerate}[(a)]
\item 
If $\alpha \notin \Ap(S;m)$, then $b\alpha + a\beta \preceq_T b'\alpha + a'\beta$ if and only if $a \preceq_S a'$ and $b \le b'$.  

\item 
If $\alpha \in \Ap(S;m)$, then $b\alpha + a\beta \preceq_T b'\alpha + a'\beta$ if and only if $a \preceq_S a'$ and either $b \le b'$ or $\alpha \preceq_S a' - a$.  

\end{enumerate}
\end{thm}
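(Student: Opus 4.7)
The plan is to reduce the divisibility relation to membership in $\Ap(T;\beta m)$: that is, $b\alpha + a\beta \preceq_T b'\alpha + a'\beta$ iff $(b'-b)\alpha + (a'-a)\beta \in \Ap(T;\beta m)$, and then apply Theorem~\ref{t:monoscopicaperyset}. Since $\gcd(\alpha,\beta)=1$, each element of $\Ap(T;\beta m)$ admits a \emph{unique} presentation $c\alpha + c'\beta$ with $c' \in \Ap(S;m)$ and $c \in [0,\beta-1]$ (the residue of $c$ modulo $\beta$ is forced by reducing the $\alpha$-coefficient), which makes membership easy to test.

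I would then split on the sign of $b' - b$. When $b \le b'$, the representation $(b'-b)\alpha + (a'-a)\beta$ already has $b' - b \in [0,\beta-1]$, so it lies in $\Ap(T;\beta m)$ exactly when $a' - a \in \Ap(S;m)$, equivalently $a \preceq_S a'$. When $b > b'$, I would rewrite
$$(b'-b)\alpha + (a'-a)\beta = (b'-b+\beta)\alpha + (a'-a-\alpha)\beta,$$
with $b'-b+\beta \in [1,\beta-1]$, so membership in $\Ap(T;\beta m)$ is now equivalent to $a'-a-\alpha \in \Ap(S;m)$.

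The core of the argument is analyzing this last condition depending on whether $\alpha \in \Ap(S;m)$. When $\alpha \notin \Ap(S;m)$, we have $\alpha \ge m$ and $\alpha - m \in S$; assuming $a'-a-\alpha \in \Ap(S;m)$ then yields
$$a' - m = a + (a' - a - \alpha) + (\alpha - m)$$
as a sum of elements of $S$, contradicting $a' \in \Ap(S;m)$. Hence this branch is empty, giving part~(a). When $\alpha \in \Ap(S;m)$, I would show that $a' - a - \alpha \in \Ap(S;m)$ holds iff $\alpha \preceq_S a' - a$. The reverse direction is immediate from the definition of $\preceq_S$, while the forward direction additionally invokes the standard Apéry closure fact that if $a, a' \in \Ap(S;m)$ and $a' - a \in S$ then $a' - a \in \Ap(S;m)$ (else $a' - m = a + (a' - a - m) \in S$ would again contradict $a' \in \Ap(S;m)$). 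Combining the two branches, and noting that $\alpha \preceq_S a' - a$ automatically forces $a \preceq_S a'$, part~(b) rewrites as the conjunction of $a \preceq_S a'$ with the disjunction ``$b \le b'$ or $\alpha \preceq_S a' - a$''.

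The main obstacle is less a single calculation than the bookkeeping required to phrase the output of the $b > b'$ branch in terms of the Apéry poset of $S$ rather than just in $S$. The closure fact above is precisely what bridges these two viewpoints, ensuring the $b > b'$ case produces the clean statement ``$\alpha \preceq_S a' - a$'' while simultaneously forcing $a \preceq_S a'$, so that the final statement factors exactly as given.
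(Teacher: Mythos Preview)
Your proposal is correct and follows essentially the same route as the paper: reduce $\preceq_T$ to membership in $\Ap(T;\beta m)$, invoke Theorem~\ref{t:monoscopicaperyset} and the uniqueness of the representation $b''\alpha + a''\beta$ forced by $\gcd(\alpha,\beta)=1$, and then split according to whether $b'-b\ge 0$ or $b'-b<0$.  The paper organizes the argument as ``if'' followed by ``converse'' rather than treating both directions simultaneously within the case split, but the content is the same; if anything, your handling of the $b>b'$ branch in part~(b)---explicitly invoking the Ap\'ery closure fact to obtain $a'-a\in\Ap(S;m)$ and hence $a\preceq_S a'$---is more explicit than the paper's terse final sentence.
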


\begin{proof}
It is clear, in either case, that if $a \preceq_S a'$ and $b \le b'$, then $b\alpha + a\beta \preceq_T b'\alpha + a'\beta$.  On the other hand, if $b > b'$, but $\alpha \in \Ap(S;m)$ and $\beta\alpha \preceq_S a' - a$, then 
$$(b'\alpha + a'\beta) - (b\alpha + a\beta) = (b' - b)\alpha + (a' - a)\beta = (b' - b + \beta)\alpha + (a' - a - \alpha)\beta \in T,$$
so again $b\alpha + a\beta \preceq_T b'\alpha + a'\beta$.  

Conversely, suppose $b\alpha + a\beta \preceq_T b'\alpha + a'\beta$, meaning $(b' - b)\alpha + (a' - a)\beta \in \Ap(T;\beta m)$.  By Theorem~\ref{t:monoscopicaperyset}, this means 
$$(b' - b)\alpha + (a' - a)\beta = b''\alpha + a''\beta$$
with $a'' \in \Ap(S;m)$ and $0 \le b'' \le \beta - 1$.  Rearranging this equality yields 
$$(b' - b - b'')\alpha + (a' - a - a'')\beta = 0,$$
and since $\gcd(\alpha, \beta) = 1$, we must have $\beta \mid (b' - b - b'')$ and $\alpha \mid (a' - a - a'')$.  Given the bounds on $b$, $b'$, and $b''$, this is only possible if $b + b'' - b' = 0$ or $b + b'' - b' = \beta$.  If the former holds, then $b' - b = b'' \ge 0$ and $a' - a = a'' \in S$, so we are done.  If the latter holds, then $a' - a - \alpha = a'' \in S$, which is only possible if $\alpha \in \Ap(S;m)$ as well.  
\end{proof}

\begin{cor}\label{c:monoscopicposet}
If $S = \<m, n_2, \ldots, n_k\>$ and $T = \<\alpha\> + \beta S$ is a monoscopic gluing, then $b'\alpha + a'\beta$ covers $b\alpha + a\beta$ in $\Ap(T;\beta m)$ if and only if one of the following holds:
\begin{enumerate}[(i)]
\item 
$a = a'$ and $b' - b = 1$;

\item 
$b = b'$ and $a' - a = n_j$ for some $j \ge 2$; or

\item 
$b = \beta - 1$, $b' = 0$, and $a' - a = \alpha$.  

\end{enumerate}
Moreover, condition~(iii) occurs in $\Ap(T;\beta m)$ if and only if $\alpha \in \Ap(S;m)$.  
\end{cor}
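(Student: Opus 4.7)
The strategy is to apply Theorem~\ref{t:groupconefacelattice}(c) to $\Ap(T;\beta m)$: $b'\alpha + a'\beta$ covers $b\alpha + a\beta$ if and only if the difference $(b'-b)\alpha + (a'-a)\beta$ is an atom of the Ap\'ery poset, equivalently a minimal generator of $T$ distinct from $\beta m$. Since the monoscopic gluing presentation $T = \<\beta m, \alpha, \beta n_2, \ldots, \beta n_k\>$ is minimal (as noted after Definition~\ref{d:monoscopic}), the three candidate difference values are $\alpha$ and $\beta n_j$ for $j \ge 2$. The classification of cases~(i)--(iii) thus reduces to determining, for each candidate, the admissible pairs $(b,b')$ and $(a,a')$ with $a,a' \in \Ap(S;m)$ and $b,b' \in \{0,\ldots,\beta-1\}$.

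The main computational tool will be the coprimality $\gcd(\alpha,\beta) = 1$. In the case that the difference equals $\beta n_j$, rearranging gives $(b'-b)\alpha = \beta\bigl(n_j - (a'-a)\bigr)$, forcing $\beta \mid (b'-b)$; the bounds on $b,b'$ then force $b = b'$ and $a' - a = n_j$, which is case~(ii). In the case that the difference equals $\alpha$, rearranging gives $(b'-b-1)\alpha = (a-a')\beta$, so $\beta \mid (b'-b-1)$; the only admissible values are $b'-b-1 = 0$ (giving $b' = b+1$ and $a = a'$, i.e.\ case~(i)) and $b'-b-1 = -\beta$ (giving $b = \beta-1$, $b' = 0$, and $a'-a = \alpha$, i.e.\ case~(iii)). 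This handles the three cases exhaustively.

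For the ``moreover'' clause, I would argue both directions directly. If case~(iii) occurs, then $a, a' := a + \alpha \in \Ap(S;m)$; assuming for contradiction that $\alpha - m \in S$ and writing $\alpha = m + s$ with $s \in S$, we would have $a' - m = a + s \in S$, contradicting $a' \in \Ap(S;m)$, so $\alpha \in \Ap(S;m)$. Conversely, if $\alpha \in \Ap(S;m)$, then taking $a = 0$, $a' = \alpha$, $b = \beta-1$, $b' = 0$ exhibits case~(iii) via Theorem~\ref{t:monoscopicaperyset}, since both $(\beta-1)\alpha$ and $\alpha\beta$ lie in $\Ap(T;\beta m)$.

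I do not anticipate any serious obstacle; the entire argument is a direct consequence of Theorem~\ref{t:groupconefacelattice}(c) together with a short divisibility argument using $\gcd(\alpha,\beta) = 1$ and the uniqueness of the representation $b\alpha + a\beta$ guaranteed by Theorem~\ref{t:monoscopicaperyset}. The only subtlety worth flagging is in the ``moreover'' direction, where one must rule out the possibility that a shift by $\alpha$ happens to land inside $\Ap(S;m)$ without $\alpha$ itself being Ap\'ery; this is handled by the short contradiction argument above.
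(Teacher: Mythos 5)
Your proof is correct and follows essentially the same route as the paper's: both reduce the cover relation, via Theorem~\ref{t:groupconefacelattice}(c), to the statement that the integer difference of the two Ap\'ery elements equals a minimal generator $\alpha$ or $\beta n_j$ of $T$. Where the paper then simply cites Theorem~\ref{t:monoscopicposet} to sort the resulting equations into cases (i)--(iii), you run a short self-contained divisibility argument using $\gcd(\alpha,\beta)=1$ together with the bounds $0 \le b, b' \le \beta-1$; this is equivalent (and avoids a former/latter mix-up present in the paper's wording), and your explicit two-direction verification of the ``moreover'' clause --- which the paper leaves implicit in Theorem~\ref{t:monoscopicposet} --- is sound.
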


\begin{proof}
By Theorem~\ref{t:groupconefacelattice}(c), $b'\alpha + a'\beta$ covers $b\alpha + a\beta$ in $\Ap(T;\beta m)$ if and only if their difference equals either $\alpha$ or $\beta n_j$ for some $j \ge 2$.  By Theorem~\ref{t:monoscopicposet}, the latter case forces~(ii) to hold, and the former case holds precisely when either~(i) or~(iii) holds.  
\end{proof}

%%%%%%%%%%%%%%%%%%%%%%%%%%%%%%%%%%%%%%%%%%%%%%%%%%%%%%%%%%%%%%%%%%%%%%%%%
\section{Monoscopic embeddings of polyhedra}%%%%%%%%%%%%%%%%%%%%%%%%%%%%%
\label{sec:monoscopicembeddings}%%%%%%%%%%%%%%%%%%%%%%%%%%%%%%%%%%%%%%%%%
%raggedbottom%%%%%%%%%%%%%%%%%%%%%%%%%%%%%%%%%%%%%%%%%%%%%%%%%%%%%%%%%%%%

In this section, we characterize the faces of the Kunz polyhedron $P_m$ containing monoscopic numerical semigroups, as well as all lower dimensional faces contained therein.  We do so by constructing a family of combinatorial embeddings (Definition~\ref{d:monoscopicembedding}) of group cones, which we show in Theorem~\ref{t:augmonoscopicfaces} induces an injection of face lattices.  The faces of $P_m$ corresponding to these faces, together with those described in Theorem~\ref{t:monoscopicfaces}, contain all of the monoscopic numerical semigroups, and only monoscopic numerical semigroups (Theorem~\ref{t:onlymonoscopicfaces}).  

Figure~\ref{f:monoscopicfacelattice} depicts the portion of the face lattice of $\mathcal C(\ZZ_{12})$ described by Theorems~\ref{t:augmonoscopicfaces} and~\ref{t:monoscopicfaces}, including all posets therein.  

\begin{notation}\label{n:phantomzero}
In order to simplify numerous expressions in the remainder of the paper, we adopt the convention of prepending a ``$0$'' entry to each point in $\mathcal C(G)$, indexed by the identity element of $G$.  More precisely, we write each $(x_1, x_2, \ldots) \in \mathcal C(G)$ in the form $(x_0, x_1, x_2, \ldots)$ with $x_0 = 0$, effectively replacing $\mathcal C(G)$ with $\{0\} \times \mathcal C(G)$.  
\end{notation}

\begin{defn}\label{d:monoscopicembedding}
Fix an Abelian group $G$, a subgroup $H \subset G$ so that $G/H$ is cyclic, and an element $\rho \in G$ whose image in $G/H$ is a generator.  Letting $\beta = |G/H|$, define 
$$
\begin{array}{r@{}c@{}l}
\Phi_\rho:\mathcal C(H) &{}\longrightarrow{}& \mathcal C(G) \\
w &{}\longmapsto{}& x
\end{array}$$
where $x_{a+b\rho} = \beta w_a + b w_{\beta\rho}$ for each $a \in H$ and $0 \le b < \beta$.  We call $\Phi_\rho$ a \emph{monoscopic embedding} of $\mathcal C(H)$ into $\mathcal C(G)$ along $\rho$.  
\end{defn}

\begin{sidewaysfigure}
\vspace{5in}
\includegraphics[width=7.9in]{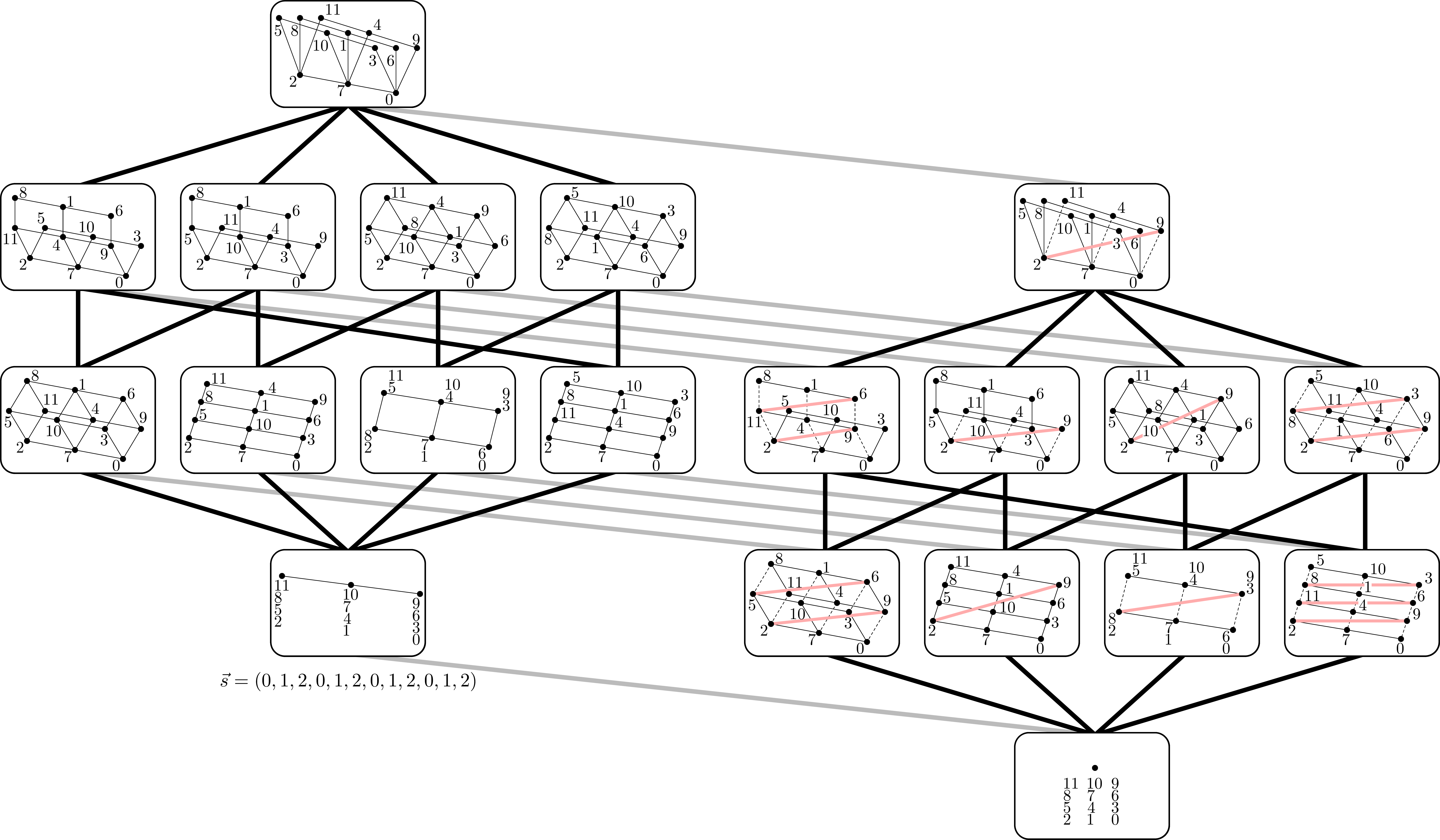}
\caption{The portion of the face lattice of $\mathcal C(\ZZ_{12})$ containing monoscopic numerical semigroups with $H = \<3\>$ and $\rho = 7$.  The right half of the figure consists of the faces in the image of $\mathcal C(\ZZ_4)$ under $\Phi_\rho$, and the posets therein are augmented monoscopic extensions.  The additional cover relations that occur in augmented extensions but not their non-augmented counterparts are marked by thick red lines.  These additional cover relations render some existing cover relations redenduant; these are marked by dashed lines.}
\label{f:monoscopicfacelattice}
\end{sidewaysfigure}

\begin{lemma}\label{l:phimap}
The monoscopic embedding $\Phi_\rho$ is well-defined and injective.  
\end{lemma}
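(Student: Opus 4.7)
The plan has three parts: (1) verify that the formula $x_{a+b\rho} = \beta w_a + b w_{\beta\rho}$ consistently specifies each coordinate; (2) check that the resulting $x$ satisfies every facet inequality of $\mathcal C(G)$; and (3) establish injectivity. For (1), since $G/H$ is cyclic of order $\beta$ with $\rho + H$ as a generator, the cosets $H, \rho + H, \ldots, (\beta-1)\rho + H$ are distinct and exhaust $G/H$. Thus each $g \in G$ admits a unique decomposition $g = a + b\rho$ with $a \in H$ and $0 \le b < \beta$, so the formula gives a single value for $x_g$.

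For (2), fix a facet inequality $x_i + x_j \ge x_{i+j}$ of $\mathcal C(G)$ and write $i = a_1 + b_1\rho$ and $j = a_2 + b_2\rho$ canonically, so that $x_i + x_j = \beta(w_{a_1} + w_{a_2}) + (b_1 + b_2) w_{\beta\rho}$. I would split based on whether $b_1 + b_2 < \beta$ or $b_1 + b_2 \ge \beta$. In the first case, $i+j$ has canonical form $(a_1 + a_2) + (b_1 + b_2)\rho$, and the inequality reduces to $w_{a_1} + w_{a_2} \ge w_{a_1+a_2}$. In the second case, using $\beta\rho \in H$, one writes $i+j = (a_1 + a_2 + \beta\rho) + (b_1 + b_2 - \beta)\rho$, and the inequality reduces to $w_{a_1} + w_{a_2} + w_{\beta\rho} \ge w_{a_1+a_2+\beta\rho}$. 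Each reduced inequality is either a facet inequality of $\mathcal C(H)$, a sum of two such facet inequalities (in the second case when $a_1+a_2 \ne 0$), or trivially valid after applying the convention $w_0 = 0$ to collapse terms at vanishing indices. The main obstacle will be systematic bookkeeping of the boundary subcases in which one of $a_1$, $a_2$, $a_1+a_2$, or $a_1+a_2+\beta\rho$ equals $0$ in $H$; each such subcase is verifiable either as a tautology or via a short derivation from the facet description of $\mathcal C(H)$.

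For (3), injectivity is immediate: setting $b = 0$ in the defining formula yields $x_a = \beta w_a$ for every $a \in H \setminus \{0\}$, so each coordinate $w_a$ is recoverable from $\Phi_\rho(w)$, and hence $\Phi_\rho(w) = \Phi_\rho(w')$ implies $w = w'$.
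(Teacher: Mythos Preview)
Your proof is correct and follows the same overall approach as the paper: unique decomposition $g = a + b\rho$, verification of the facet inequalities of $\mathcal C(G)$ by reducing to those of $\mathcal C(H)$, and injectivity via the projection onto the $H$-indexed coordinates (which scales by $\beta$). In fact your case split on $b_1 + b_2 < \beta$ versus $b_1 + b_2 \ge \beta$ is more explicit than the paper's argument, which writes the inequality only in the form corresponding to your first case and leaves the wrap-around case implicit; your handling of the boundary subcases is also rendered routine by the convention $w_0 = 0$ adopted in Notation~\ref{n:phantomzero}.
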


\begin{proof}
Under the given assumptions, every element of $G$ can be written uniquely in the form $a + k\rho$ for some $a \in H$ and $b \in \ZZ$ with $0 \le b \le \beta - 1$.  Moreover, if $w \in \mathcal C(H)$ and $x = \Phi_\rho(w)$, it is easy to check that $x_0 = 0$, and for any $a, a' \in H$ and $0 \le b, b' < \beta$, 
\begin{align*}
x_{a+b\rho} + x_{(a'-a)+(b'-b)\rho}
&= (\beta w_a + b w_{\beta\rho}) + (\beta w_{a'-a} + (b' - b) w_{\beta\rho})
= \beta(w_a + w_{a'-a}) + b' w_{\beta\rho} \\
&\ge \beta w_{a'} + b' w_{\beta\rho}
= x_{a'+b'\rho}
\end{align*}
so the image of $\Phi_\rho$ lies in $\mathcal C(G)$.  This proves $\Phi_\rho$ is well-defined.  From here, it is clear that $\Phi_\rho$ is linear, and projecting the image of $\Phi_\rho$ onto the coordinates indexed by $H$ yields a linear map that simply scales each vector by $\beta$, so $\Phi_\rho$ is injective as well.  
\end{proof}

\begin{defn}\label{d:monoscopicextension}
Fix a poset $P = (H/H', \preceq_P)$, where $H' \subset H$ is some subgroup, and suppose $a, b \in H/H'$ and $0 \le b, b' \le \beta - 1$.  
\begin{enumerate}[(a)]
\item 
The \emph{monoscopic extension} of $P$ along $\rho$ is the poset $Q = (G/H', \preceq_Q)$ satisfying $a + b \rho \preceq_Q a' + b' \rho$ whenever $a \preceq_P a'$ and $b \le b'$.  

\item 
The \emph{augmented monoscopic extension} of $P$ along $\rho$ is the poset $Q$ defined as follows.  
\begin{enumerate}[(i)]
\item 
If $\beta\rho \ne 0$, then $Q = (G/H', \preceq_Q)$ with $a + b \rho \preceq_Q a' + b' \rho$ whenever $a \preceq_P a'$ and either $b \le b'$ or $\beta\rho \preceq_P a' - a$.

\item 
If~$\beta\rho = 0$, then $Q$ is the poset on $G/(H' + \<\rho\>)$ that is identical to $P$ under the natural group isomorphism $G/(H' + \<\rho\>) \iso H/H'$.  

\end{enumerate}
\end{enumerate}
\end{defn}

\begin{remark}\label{r:monoscopicextension}
The (non-augmented) monoscopic extension of a poset $P$ is isomorphic to the Cartesian product of $P$ with a total ordering.  Also, any augmented monoscopic extension is a refinement of the corresponding non-augmented monoscopic extension.  
\end{remark}

\begin{cor}\label{c:monoscopicextensiongluing}
If $S = \<m, n_2, \ldots, n_k\>$, and $T = \<\alpha\> + \beta S$ is a monoscopic gluing, then the Kunz poset of $T$ is the monoscopic extension of the Kunz poset of $S$ along $\rho = \ol \alpha$, one that is augmented if and only if $\alpha \in \Ap(S)$.  
\end{cor}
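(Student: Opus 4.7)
The plan is to derive Corollary~\ref{c:monoscopicextensiongluing} as a direct translation of Theorem~\ref{t:monoscopicposet} into the language of Definition~\ref{d:monoscopicextension}, with the bulk of the work being a careful alignment of identifications rather than any new argument.

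First, I would fix the ambient bookkeeping. Since $S$ and $T$ are numerical semigroups, their Kunz subgroups are trivial by Theorem~\ref{t:groupconefacelattice}(b), so the Kunz poset $P$ of $S$ lives on $\ZZ_m$ and the Kunz poset $Q$ of $T$ lives on $\ZZ_{\beta m}$. Set $H = \langle \beta \rangle \subset \ZZ_{\beta m}$ and identify $P$ with a poset on $H$ via the group isomorphism $\ZZ_m \cong H$ sending $k \mapsto k\beta$. Since $\gcd(\alpha,\beta) = 1$, the residue $\rho = \overline{\alpha} \in \ZZ_{\beta m}$ has image in $\ZZ_{\beta m}/H \cong \ZZ_\beta$ that generates that quotient, so by the uniqueness discussion preceding Lemma~\ref{l:phimap}, every element of $\ZZ_{\beta m}$ has a unique representation $a + b\rho$ with $a \in H$ and $0 \le b < \beta$. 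By Theorem~\ref{t:monoscopicaperyset}, these unique representations are exactly the residues of the elements $b\alpha + a\beta$ of $\Ap(T;\beta m)$ with $a \in \Ap(S;m)$.

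With this identification in place, both cases of Theorem~\ref{t:monoscopicposet} read off directly. If $\alpha \notin \Ap(S;m)$, part~(a) states that $b\alpha + a\beta \preceq_T b'\alpha + a'\beta$ iff $a \preceq_S a'$ and $b \le b'$, which is precisely the relation defining the (non-augmented) monoscopic extension in Definition~\ref{d:monoscopicextension}(a). If $\alpha \in \Ap(S;m)$, part~(b) appends the alternative $\alpha \preceq_S a' - a$; under the identification $H \cong \ZZ_m$, one checks that $\beta\rho = \beta\alpha \bmod \beta m$ corresponds to $\alpha \bmod m$, so this alternative is exactly ``$\beta\rho \preceq_P a' - a$,'' matching Definition~\ref{d:monoscopicextension}(b)(i) verbatim.

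The last point to address is why Case~(b)(ii) of the definition (the collapsed case $\beta\rho = 0$) never intervenes when $\alpha \in \Ap(S;m)$. The reason is that $\Ap(S;m)$ contains at most one representative in the zero class modulo $m$, namely $0$ itself, and the hypothesis $\alpha \in S \setminus \{n_1,\ldots,n_k\}$ excludes $\alpha = 0$; hence $\alpha \not\equiv 0 \bmod m$, which forces $\beta\rho \ne 0$ in $\ZZ_{\beta m}$. The main (and really only) obstacle in writing this proof is the three-layered bookkeeping: the Apéry description from Theorem~\ref{t:monoscopicaperyset}, the unique $a + b\rho$ decomposition of $\ZZ_{\beta m}$, and the identification under $H \cong \ZZ_m$ that sends $\beta\rho$ back to $\alpha \bmod m$. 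Once these are lined up, both halves of the corollary follow immediately from Theorem~\ref{t:monoscopicposet}.
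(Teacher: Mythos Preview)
Your proposal is correct and follows exactly the paper's approach: the paper's proof is the one-liner ``Note that if $\beta\rho = 0$, then $\alpha \in \Ap(S)$ is impossible. As such, the claim follows from Theorem~\ref{t:monoscopicposet} upon unraveling definitions,'' and your write-up is simply that unraveling done in full.

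One small slip: in ruling out $\alpha = 0$, you cite the hypothesis $\alpha \in S \setminus \{n_1,\ldots,n_k\}$, but $0$ \emph{does} lie in that set (it is in $S$ and is not a minimal generator). The correct reason $\alpha \ne 0$ is that $\gcd(\alpha,\beta)=1$ with $\beta \ge 2$. This does not affect the argument otherwise.
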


\begin{proof}
Note that if $\beta\rho = 0$, then $\alpha \in \Ap(S)$ is impossible.  As such, the claim follows from Theorem~\ref{t:monoscopicposet} upon unraveling definitions.  
\end{proof}

\begin{thm}\label{t:augmonoscopicfaces}
The image of $\Phi_\rho$ is a face of $\mathcal C(G)$.  More precisely, given any face $F \subset \mathcal C(H)$ with Kunz poset $P = (H/H', \preceq)$, the image $\Phi_\rho(F)$ is a face of $\mathcal C(G)$ whose Kunz poset is the augmented monoscopic extension $Q$ of $P$ along $\rho$.  
\end{thm}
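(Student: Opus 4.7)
My plan is to first show that $\Phi_\rho(\mathcal{C}(H))$ is itself a face $F_0$ of $\mathcal{C}(G)$, then use the induced bijection of face lattices to deduce that $\Phi_\rho(F)$ is a face, and finally compute its Kunz subgroup and order relations to verify they match $Q$.

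For the face claim, I will identify $F_0$ as the locus in $\mathcal{C}(G)$ cut out by the facet equations $x_{(b-1)\rho} + x_\rho = x_{b\rho}$ (for $2 \leq b < \beta$) together with $x_a + x_{b\rho} = x_{a+b\rho}$ (for $a \in H \setminus \{0\}$ and $1 \leq b < \beta$).  The containment $\Phi_\rho(\mathcal{C}(H)) \subseteq F_0$ is immediate from Definition~\ref{d:monoscopicembedding}, and for the reverse these equations force any $x \in F_0$ to satisfy $x_{a+b\rho} = x_a + b\,x_\rho$, so $w_a := x_a/\beta$ for $a \in H$ defines a preimage point in $\mathcal{C}(H)$ (that $w \in \mathcal{C}(H)$ reduces to the facet inequalities of $\mathcal{C}(G)$ restricted to the $H$-indexed coordinates).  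Since $\Phi_\rho \colon \mathcal{C}(H) \to F_0$ is then a linear bijection of polyhedral cones, it induces a bijection of face lattices, so $\Phi_\rho(F)$ is a face of $F_0$, hence of $\mathcal{C}(G)$.

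For the Kunz poset of $\Phi_\rho(F)$: a point in its relative interior has the form $\Phi_\rho(w)$ for $w$ in the relative interior of $F$, where $w_c$ vanishes precisely for $c \in H'$.  Reading off when the coordinates of $\Phi_\rho(w)$ vanish yields Kunz subgroup $H'$ when $\beta\rho \notin H'$ and $H' + \langle\rho\rangle$ when $\beta\rho \in H'$, matching cases (i) and (ii) of Definition~\ref{d:monoscopicextension}(b) respectively.  For the order relations, Theorem~\ref{t:groupconefacelattice}(a) reduces $\overline{g_1} \preceq \overline{g_2}$ (with $g_i = a_i + b_i \rho$) to whether $x_{g_1} + x_{g_2 - g_1} = x_{g_2}$ holds throughout $\Phi_\rho(F)$.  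I split on whether $b_1 \leq b_2$ or $b_1 > b_2$: in the first case, substituting Definition~\ref{d:monoscopicembedding} reduces this directly to $w_{a_1} + w_{a_2 - a_1} = w_{a_2}$, equivalent to $a_1 \preceq_P a_2$; in the second case, writing $g_2 - g_1 = (a_2 - a_1 - \beta\rho) + (b_2 - b_1 + \beta)\rho$ yields $w_{a_1} + w_{a_2 - a_1 - \beta\rho} + w_{\beta\rho} = w_{a_2}$, which I will recognize as the sum of the two non-negative facet slacks $(w_{a_2 - a_1 - \beta\rho} + w_{\beta\rho}) - w_{a_2 - a_1}$ and $(w_{a_1} + w_{a_2 - a_1}) - w_{a_2}$.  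This total slack vanishes on $F$ if and only if both individual slacks do, i.e.\ iff $\beta\rho \preceq_P a_2 - a_1$ and $a_1 \preceq_P a_2$ in $P$.  These conclusions reassemble exactly to the ``$a \preceq_P a'$ and either $b \leq b'$ or $\beta\rho \preceq_P a' - a$'' clause of Definition~\ref{d:monoscopicextension}(b)(i); and when $\beta\rho \in H'$, the identities $w_{\beta\rho} = 0$ and $w_{a_2 - a_1 - \beta\rho} = w_{a_2 - a_1}$ on $F$ collapse the $b_1 > b_2$ case to simply $a_1 \preceq_P a_2$, matching Definition~\ref{d:monoscopicextension}(b)(ii).

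The main obstacle is the slack decomposition in the $b_1 > b_2$ case: casting the single tight equation on $\Phi_\rho(F)$ as a sum of two non-negative facet slacks is what lets me replace it by two independent facet equalities on $F$, and these are precisely what produce the ``or $\beta\rho \preceq_P a' - a$'' conjunct characteristic of the augmented monoscopic extension.  The remaining edge cases (one of $a_1$, $a_2$, $a_2 - a_1$, $\beta\rho$ lying in $H'$, or the degenerate situation $\beta\rho = 0$ in $G$ which slightly alters the facet description of $F_0$) should then follow by direct inspection once this main decomposition is in place.
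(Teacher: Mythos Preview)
Your overall strategy and the Kunz poset computation (the $b_1 \le b_2$ versus $b_1 > b_2$ split, and the slack decomposition in the latter case) match the paper's argument essentially verbatim.  However, there is a genuine gap in your first step: the facet equations you list do \emph{not} cut out $\Phi_\rho(\mathcal C(H))$.

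Your equations $x_{(b-1)\rho} + x_\rho = x_{b\rho}$ for $2 \le b < \beta$ and $x_a + x_{b\rho} = x_{a+b\rho}$ for $a \in H \setminus \{0\}$, $1 \le b < \beta$, together force $x_{a+b\rho} = x_a + b\,x_\rho$ for all $a \in H$ and $0 \le b < \beta$, but they impose no relation between $x_\rho$ and $x_{\beta\rho}$.  Your preimage check then fails: with $w_a := x_a/\beta$ one has $\Phi_\rho(w)_{a+b\rho} = x_a + \tfrac{b}{\beta}\,x_{\beta\rho}$, whereas your equations give $x_{a+b\rho} = x_a + b\,x_\rho$; these coincide only if $x_{\beta\rho} = \beta x_\rho$, which you have not imposed.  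Concretely, for $G = \ZZ_6$, $H = \{0,2,4\}$, $\rho = 1$ (so $\beta = 2$), the point $(x_1,\ldots,x_5) = (5,6,11,10,15)$ lies in $\mathcal C(\ZZ_6)$ and satisfies all of your equations, yet $x_2 \ne 2x_1$ so it is not in the image of $\Phi_\rho$.  In fact, the face your equations describe is exactly the larger face $\RR_{\ge 0}\vec s + \Phi_\rho(\mathcal C(H))$ of Theorem~\ref{t:monoscopicfaces} (the \emph{non}-augmented monoscopic extension), which has dimension one greater than $\Phi_\rho(\mathcal C(H))$.

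The fix is to add the single ``augmenting'' equation $x_\rho + x_{(\beta-1)\rho} = x_{\beta\rho}$ (valid as a facet equality whenever $\beta\rho \ne 0$); this is precisely the relation the paper invokes in its converse direction when it writes $\beta x_\rho = x_\rho + x_{(\beta-1)\rho} = x_{\beta\rho}$.  With that equation included, your linear-bijection-of-face-lattices argument goes through, and the rest of your proposal is correct and essentially identical to the paper's proof.  Note that this is \emph{not} merely the $\beta\rho = 0$ edge case you flagged; the missing equation is needed in the generic case $\beta\rho \ne 0$ as well.
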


\begin{proof}
First, fix $w \in F \subset \mathcal C(H)$, let $x = \Phi_\rho(w)$, let $F'$ denote the face containing~$x$, and let $Q$ denote the corresponding Kunz poset of $F'$.  If $\beta\rho = 0$, then $x_{a + b\rho} = 0$ whenever $w_a = 0$ and $bw_{\beta\rho} = 0$.  If $\beta\rho \ne 0$, then this occurs when $a \in H'$ and $b = 0$.  
% On the other hand, if $\beta\rho = 0$, then this occurs whenever $a \in H'$, independent of $b$.  
In either case, $Q$ has the claimed ground set.  

Now, suppose $a, b \in H$ and $0 \le b, b' \le \beta - 1$.  If $\beta\rho = 0$, then 
$$x_{a + b\rho} + x_{(a' - a) + (b' - b)\rho} = \beta w_a + \beta w_{a' - a} \ge \beta w_{a'} = x_{a' + b'\rho}$$
with equality precisely when $a \preceq_P a'$, so assume $\beta\rho \ne 0$.  If $b \le b'$, then
\begin{align*}
x_{a + b\rho} + x_{(a' - a) + (b' - b)\rho}
&= (\beta w_a + bw_{\beta\rho}) + (\beta w_{a' - a} + (b' - b)w_{\beta\rho})
= \beta(w_a + w_{a' - a}) + b'w_{\beta\rho}
\\
&\ge \beta w_{a'} + b'w_{\beta\rho}
= x_{a' + b'\rho},
\end{align*}
with equality precisely when $a \preceq_P a'$.  Alternatively, if $b > b'$, then
$$(a' + b'\rho) - (a + b\rho) = (a' - a - \beta\rho) + (b' - b + \beta)\rho$$
with $0 \le b' - b + \beta \le \beta - 1$, so we have
\begin{align*}
x_{a + b\rho} + x_{(a' - a - \beta\rho) + (b' - b + \beta)\rho}
&= (\beta w_a + bw_{\beta\rho}) + (\beta w_{a' - a - \beta\rho} + (b' - b + \beta)w_{\beta\rho})
\\
&= \beta(w_a + w_{a' - a - \beta\rho} + w_{\beta\rho}) + b'w_{\beta\rho}
\\
&\ge \beta(w_a + w_{a' - a}) + b'w_{\beta\rho}
\\
&\ge \beta w_{a'} + b'w_{\beta\rho}
= x_{a' + b'\rho},
\end{align*}
with equality precisely when $\beta\rho \preceq_P a' - a$ and $a \preceq_P a'$.  In either case, we obtain $a + b\rho \preceq_Q a' + b'\rho$ in the exact cases required by Definition~\ref{d:monoscopicextension}, thereby proving $\Phi_\rho(w)$ lies in the interior of the claimed face.  

Conversely, let $F' \subset \mathcal C(G)$ denote the face whose Kunz poset is the augmented monoscopic extension $Q$ of $P$ (which must exist by the above argument), and fix $x \in F'$.  Defining $w_a = \tfrac{1}{\beta}x_a$ for $a \in H$, we see
\begin{align*}
x_{a + b\rho}
= x_a + bx_\rho
= x_a + \tfrac{1}{\beta}b(x_\rho + x_{(\beta-1)\rho})
= x_a + \tfrac{1}{\beta}bx_{\beta\rho}
= \beta w_a + bw_{\beta\rho}
= \Phi_\rho(w),
\end{align*}
where the first and second equalities hold since $Q$ is an augmented monoscopic extension.  This proves set equality $\Phi_\rho(F) = F'$, thereby completing the proof.  
\end{proof}

\begin{defn}\label{d:betaray}
The \emph{beta ray} $\vec s$ of a monoscopic embedding $\Phi_\rho$ is defined
$$s_{a+b\rho} = b$$
for each $a \in H$ and $0 \le b \le \beta - 1$.  Notice that $s_a = 0$ precisely when $a \in H$, so $\vec s$ must lie in a face whose Kunz subgroup under is $H$.  
\end{defn}

\begin{lemma}\label{l:betarayind}
The beta ray $\vec s$
\begin{enumerate}[(a)]
\item 
lies in a face of $\mathcal C(G)$ whose corresponding subgroup is $H$, and

\item 
is linearly independent to each vector in the image of~$\Phi_\rho$.  

\end{enumerate}
\end{lemma}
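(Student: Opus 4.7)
The plan is to handle (a) and (b) in turn, both of which rest on two features of the setup: every $g \in G$ admits a unique canonical expression $g = a + b\rho$ with $a \in H$ and $0 \le b \le \beta - 1$ (since $\rho$ generates the cyclic quotient $G/H$ of order $\beta$), and $\beta\rho \in H$.

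For (a), I would first confirm $\vec s \in \mathcal C(G)$ by checking the facet inequalities. Given $g = a + b\rho$ and $g' = a' + b'\rho$ in canonical form, the sum $g + g'$ has canonical form $(a + a') + (b + b')\rho$ when $b + b' < \beta$ and $(a + a' + \beta\rho) + (b + b' - \beta)\rho$ when $b + b' \ge \beta$, the latter staying canonical because $\beta\rho \in H$. In both cases the $\rho$-coefficient of $g + g'$ is at most $b + b'$, so $s_{g+g'} \le s_g + s_{g'}$, and equality holds precisely in the first case. Having placed $\vec s$ in $\mathcal C(G)$, I would then apply Theorem~\ref{t:groupconefacelattice}(a) to the (minimal) face $F$ containing $\vec s$ in its relative interior: its Kunz subgroup is $\{g \in G : s_g = 0\} = \{a + b\rho : b = 0\} = H$, as claimed.

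For (b), the plan is to show $\vec s$ is not a scalar multiple of any vector in the image of $\Phi_\rho$, which gives the desired linear independence since $\vec s \ne 0$ (note $s_\rho = 1$). Suppose $\vec s = c\,\Phi_\rho(w)$ for some scalar $c$ and some $w$ in (the linear span of) $\mathcal C(H)$. Projecting both sides onto the coordinates indexed by $H$ yields $0 = s_a = c \beta w_a$ for each $a \in H$, so either $c = 0$, forcing $\vec s = 0$, or $w$ vanishes on $H$. In the latter case, $w_{\beta\rho} = 0$ as well (either as an instance of $w|_H = 0$ when $\beta\rho \ne 0$, or via the convention $w_0 = 0$ from Notation~\ref{n:phantomzero} when $\beta\rho = 0$), so $\Phi_\rho(w) \equiv 0$ and again $\vec s = 0$. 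Either way we contradict $s_\rho = 1$.

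The hardest part of the proof is really just the canonical-form bookkeeping in (a) and the mild edge case $\beta\rho = 0$ in (b); neither presents a genuine conceptual obstacle, since the two structural facts recorded above do all of the work.
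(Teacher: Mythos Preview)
Your proof is correct and follows essentially the same approach as the paper. For part~(a) the paper simply says the claim is ``easy to verify,'' and you have filled in precisely the verification one would want; for part~(b) both you and the paper use the projection onto the coordinates indexed by~$H$, observing that this projection kills~$\vec s$ but is injective on the image of~$\Phi_\rho$.
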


\begin{proof}
The first claim is easy to verify.  For the second claim, since $\Phi_\rho$ is linear and $\mathcal C(H)$ is full-dimensional, it suffices to prove $\vec s$ lies outside of the image of $\Phi_\rho$.  Indeed, projecting the image of $\Phi_\rho$ onto the coordinates indexed by $H$ is injective by the proof of Lemma~\ref{l:phimap}, while applying the same projection to $\vec s$ yields $0$.  
% This completes the proof.  
\end{proof}

\begin{thm}\label{t:monoscopicfaces}
For any face $F \subset \mathcal C(H)$, the set $\RR_{\ge 0}\vec s + \Phi_\rho(F)$ is a face of $\mathcal C(G)$ whose Kunz poset is the monoscopic extension of the Kunz poset of $F$.  
\end{thm}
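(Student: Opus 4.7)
The plan is to identify a face $F''$ of $\mathcal C(G)$ whose Kunz poset is the monoscopic extension $Q$ of the Kunz poset $P$ of $F$, show that both $\Phi_\rho(F)$ and every non-negative multiple of $\vec s$ lie in $F''$, and then establish the reverse containment $F'' \subseteq \RR_{\ge 0}\vec s + \Phi_\rho(F)$ by extracting a decomposition of each point of $F''$.

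To produce $F''$ and identify its Kunz poset, I would evaluate $y = \Phi_\rho(w) + \lambda \vec s$ for $w$ in the relative interior of $F$ and $\lambda > 0$, and repeat the case analysis from the proof of Theorem~\ref{t:augmonoscopicfaces}. When $b \le b'$, the $\vec s$-contributions to both sides of the facet inequality $y_{a+b\rho} + y_{(a'-a)+(b'-b)\rho} \ge y_{a'+b'\rho}$ are equal to $\lambda b'$, so equality holds precisely when $a \preceq_P a'$, which is exactly the non-augmented relation. When $b > b'$, the coset representative $(a'-a-\beta\rho) + (b'-b+\beta)\rho$ forces the $\vec s$-contribution on the left to exceed the right by $\lambda\beta > 0$, strictly breaking any equality that held in the augmented case when $\beta\rho \preceq_P a'-a$. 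Hence the Kunz poset of $y$ is exactly $Q$, so $y$ lies in the relative interior of a face $F''$ with Kunz poset $Q$.

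For the two inclusions, since the augmented extension refines $Q$, the face produced in Theorem~\ref{t:augmonoscopicfaces} is a face of $F''$, giving $\Phi_\rho(F) \subseteq F''$. A direct check from $s_{a+b\rho} = b$ shows $s_g + s_{g'-g} = s_{g'}$ whenever $g \preceq_Q g'$, so $\vec s \in F''$, and since $F''$ is a cone, we get $\RR_{\ge 0}\vec s + \Phi_\rho(F) \subseteq F''$. Conversely, for any $y \in F''$, the relations forced by $Q$ imply $y_{a+b\rho} = y_a + b y_\rho$, and in particular $y_{b\rho} = b y_\rho$, for $a \in H$ and $0 \le b < \beta$. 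Setting $w_a = y_a/\beta$ for $a \in H$ and $\lambda = y_\rho - y_{\beta\rho}/\beta$, I would verify $w \in F$ by dividing the facet inequalities of $\mathcal C(G)$ indexed by $H$ by $\beta$ and matching the resulting equalities to $P$, verify $\lambda \ge 0$ by iterating $y_\rho + y_{k\rho} \ge y_{(k+1)\rho}$ to obtain $\beta y_\rho \ge y_{\beta\rho}$, and verify $y = \Phi_\rho(w) + \lambda \vec s$ by direct substitution.

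The main obstacle is the case analysis when $b > b'$: one must carry the coset rewriting $(a'-a-\beta\rho) + (b'-b+\beta)\rho$ through the computation and track the contributions from $w$ (using both the $\mathcal C(H)$ inequalities and the existing relations of $P$) alongside the new $\vec s$-contributions, ensuring that the $\lambda\beta$ surplus precisely breaks the augmented equalities and leaves exactly the relations of $Q$.
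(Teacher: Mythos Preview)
Your proposal is correct and follows essentially the same approach as the paper's proof: compute the Kunz poset of a generic point $\Phi_\rho(w) + \lambda\vec s$ via the same $b \le b'$ versus $b > b'$ case analysis, then for the reverse containment decompose an arbitrary $y$ in that face using $\lambda = y_\rho - \tfrac{1}{\beta}y_{\beta\rho}$ and $w_a = \tfrac{1}{\beta}y_a$. The only notable difference is cosmetic: for the reverse direction the paper verifies $y - \lambda\vec s \in \Phi_\rho(F)$ by re-running the full $b \le b'$ and $b > b'$ facet computations, whereas you first extract the structural relation $y_{a+b\rho} = y_a + by_\rho$ from $Q$ and then check $w \in F$ and $y = \Phi_\rho(w) + \lambda\vec s$ directly, which is a bit cleaner.
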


\begin{proof}
Let $P = (H/H', \preceq_P)$ denote the Kunz poset of $F$, let $F'$ denote the smallest face containing $\RR_{\ge 0}\vec s + \Phi_\rho(F)$, and let $Q = (G/H'', \preceq_Q)$ denote the Kunz poset of~$F'$.  Since $\Phi_\rho(F) \subset F'$, Theorem~\ref{t:augmonoscopicfaces} implies $H'' \subset H'$, and the coordinates in which $s$ is zero are precisely those indexed by $H$, so we must have $H'' = H'$.  Next, fix $x \in \RR_{\ge 0}\vec s + \Phi_\rho(F)$, and write $x$ in the form $x = y + cs$ for $y \in \Phi_\rho(F)$ and $c \ge 0$.  If~$a + b\rho, a' + b'\rho \in G$ satisfy $a \preceq_P a'$ and $b \le b'$, then by Theorem~\ref{t:augmonoscopicfaces},
\begin{align*}
x_{a + b\rho} + x_{(a' - a) + (b' - b)\rho}
= y_{a + b\rho} + cb + y_{(a' - a) + (b' - b)\rho} + c(b' - b)
= y_{a' + b'\rho} + cb'
= x_{a' + b'\rho}.
\end{align*}
Additionally, among the facet equations satisfied by $\Phi_\rho(F)$, these are the only ones satisfied by $s$.  As such, we conclude $Q$ equals the monoscopic extension of $P$.  

Lastly, fix $x \in F'$.  Let $c = x_\rho - \tfrac{1}{\beta}x_{\beta\rho}$, and write $y = x - cs$.  To complete the proof, we must show $y \in \Phi_\rho(F)$.  Fix $a + b\rho, a' + b'\rho \in G$.  If $b \le b'$, then
\begin{align*}
y_{a + b\rho} + y_{(a' - a) + (b' - b)\rho}
= x_{a + b\rho} - cb + x_{(a' - a) + (b' - b)\rho} - c(b' - b)
\ge x_{a' + b'\rho} - cb'
= y_{a' + b'\rho}
\end{align*}
with equality precisely when $a \preceq_P a'$.  On the other hand, if $b > b'$, then
\begin{align*}
y_{a + b\rho} + y_{(a' - a - \beta\rho) + (b' - b + \beta)\rho} 
&= x_{a + b\rho} - cb + x_{(a' - a - \beta\rho) + (b' - b + \beta)\rho} - c(b' - b + \beta)
\\
&= x_{a + b\rho} + x_{(a' - a - \beta\rho) + (b' - b + \beta)\rho} - c\beta - cb'
\\
&= x_{a + b\rho} + x_{(a' - a - \beta\rho) + (b' - b + \beta)\rho} - \beta x_\rho + x_{\beta\rho} - cb'
\\
&\ge x_{a + b\rho} + x_{(a' - a) + (b' - b + \beta)\rho} - \beta x_\rho - cb'
\\
&= x_{a + b\rho} + x_{(a' - a)} - (b - b') x_\rho - cb'
\\
&\ge x_{a' + b\rho} - (b - b') x_\rho - cb'
% \\
% &\ge x_{a' + (b' + \beta)\rho} - \beta x_\rho - cb'
\\
&= x_{a' + b'\rho} - cb'
= y_{a' + b'\rho}
\end{align*}
with equality whenever $\beta\rho \preceq_P a' - a$ and $a \preceq_P a'$.  We conclude $y \in \Phi_\rho(F)$.  
\end{proof}

Our final result of this section is a converse of sorts to Corollary~\ref{c:monoscopicextensiongluing}, namely that the faces of the group cone containing monoscopic numerical semigroups contain only monoscopic numerical semigroups.  

\begin{thm}\label{t:onlymonoscopicfaces}
Let $S = \<m, n_2, \ldots, n_k\>$, suppose $T = \<\alpha\> + \beta S$ is a monoscopic gluing, and let $F \subset \mathcal C(\ZZ_{\beta m})$ denote the face containing $T$.  Any numerical semigroup $T'$ in $F$ can be expressed as a monoscopic gluing $T' = \<\alpha'\> + \beta S'$, where $\alpha' \equiv \alpha \bmod \beta m$ and $S'$ is a numerical semigroup on the same face of $\mathcal C(\ZZ_m)$ as $S$.  
\end{thm}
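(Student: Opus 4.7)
Set $H = \beta\ZZ_{\beta m} \cong \ZZ_m$ and $\rho = \ol\alpha \in \ZZ_{\beta m}$; the hypothesis $\gcd(\alpha,\beta) = 1$ guarantees $\rho$ projects to a generator of $\ZZ_{\beta m}/H$. Let $F_S$ denote the face of $\mathcal C(\ZZ_m)$ containing $S$. Corollary~\ref{c:monoscopicextensiongluing} together with Theorems~\ref{t:augmonoscopicfaces} and~\ref{t:monoscopicfaces} identify $F$ as $\Phi_\rho(F_S)$ when $\alpha \in \Ap(S;m)$ and as $\RR_{\ge 0}\vec s + \Phi_\rho(F_S)$ otherwise; in either case, the Ap\'ery tuple $\mathbf a$ of any $T' \in F$ admits a unique decomposition $\mathbf a = c\vec s + \Phi_\rho(\mathbf w)$ with $c \in \RR_{\ge 0}$ and $\mathbf w \in F_S$ (with $c = 0$ forced in the augmented case), via Lemma~\ref{l:betarayind}.

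To extract $S'$, I read the $b = 0$ coordinates: $a_{\beta i} = \beta w_{\beta i}$, and $a_{\beta i} \equiv \beta i \bmod \beta m$ forces each $w_{\beta i}$ to be a non-negative integer congruent to $i$ modulo $m$; Theorem~\ref{t:kunzlatticepts}(b) then identifies $\mathbf w$ as the Ap\'ery tuple of a numerical semigroup $S'$ lying on $F_S$. Similarly $c = a_\rho - \tfrac{1}{\beta}a_{\beta\rho}$ is a non-negative integer, since $a_{\beta\rho}$ is a multiple of $\beta$. Setting $\alpha' := a_\rho$ gives $\alpha' \equiv \alpha \bmod \beta m$ and in particular $\gcd(\alpha',\beta) = 1$.

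To conclude $T' = \<\alpha'\> + \beta S'$, I identify the minimal generators of $T'$ using the atoms of its Kunz poset $Q$, which coincides with that of $T$. By Corollary~\ref{c:monoscopicposet}, these atoms are exactly $\rho$ (cover type~(i)) together with $\ol{\beta n_j}$ for each minimal generator $n_j > m$ of $S$ (type~(ii)), since type~(iii) requires $b = \beta - 1 > 0$ and so cannot produce a cover of the minimum element. Because $S$ and $S'$ both lie on $F_S$, their minimal generators share residues modulo $m$, so the Ap\'ery elements of $T'$ at these atom positions are precisely $\alpha'$ and the $\beta n_j'$ (the latter again via $a_{\beta i} = \beta w_{\beta i}$). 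These together with $\beta m$ form the full minimal generating set of $T'$, matching $\<\alpha'\> + \beta S'$.

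The main obstacle is careful bookkeeping among the three overlapping indexings by $\ZZ_{\beta m}$, $H = \beta\ZZ_{\beta m}$, and $\ZZ_m$, especially when translating the decomposition of $\mathbf a$ into concrete data for $S'$ and $\alpha'$. A final check needed for Definition~\ref{d:monoscopic} is $\alpha' \in S'$: with $i := \alpha \bmod m$, the Ap\'ery element of $S'$ at position $i$ equals $a_{\beta\rho}/\beta$ (since $\beta\rho = \beta i$ in $\ZZ_{\beta m}$), and because $\beta\alpha' \in T'$ lies in class $\beta\rho$ modulo $\beta m$ we obtain $a_{\beta\rho} \le \beta\alpha'$, whence this Ap\'ery element is at most $\alpha'$.
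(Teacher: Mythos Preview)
Your proof is correct and takes a genuinely different route from the paper's. Where the paper works directly with the minimal generators of $T'$---noting that those other than $\alpha'$ must lie in $\beta\ZZ$, defining $S'$ from them, and then invoking Corollary~\ref{c:monoscopicextensiongluing} to verify both the gluing conditions and the equality of Kunz posets---you instead exploit the geometric description of $F$ as $\Phi_\rho(F_S)$ or $\RR_{\ge 0}\vec s + \Phi_\rho(F_S)$ furnished by Theorems~\ref{t:augmonoscopicfaces} and~\ref{t:monoscopicfaces}. The decomposition $\mathbf a = c\vec s + \Phi_\rho(\mathbf w)$ lets you read off both $S'$ and $\alpha'$ at once, and the atom analysis via Corollary~\ref{c:monoscopicposet} then recovers the generating set. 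Your approach makes fuller use of the section's machinery (in particular the linear-algebraic content of Lemma~\ref{l:betarayind}) and delivers $\mathbf w \in F_S$ essentially for free; the paper's argument is shorter and more self-contained, needing only the combinatorics of the poset.

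One small omission: Definition~\ref{d:monoscopic} also requires $\alpha' \notin \{m, n_2', \ldots, n_k'\}$, which you do not verify explicitly. It follows at once from the minimality of the generating set you establish for $T'$, since $\alpha' = n_j'$ (or $\alpha' = m$) would make $\beta n_j'$ (respectively $\beta m$) expressible as $\beta$ copies of the generator~$\alpha'$, contradicting minimality.
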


\begin{proof}
Let $\alpha' \in T'$ denote the minimal generator of $T'$ satisfying $\alpha' \equiv \alpha \bmod \beta m$, and let $\rho \in \ZZ_{\beta m}$ denote the equivalence class containing $\alpha$ and $\alpha'$.  The remaining generators of $T'$ must each be divisible by $\beta$, so we can write $T' = \<\alpha', \beta m, \beta n_2', \ldots, \beta n_k'\>$.  
Letting~$S' = \<m, n_2', \ldots, n_k'\>$, we claim $T' = \<\alpha'\> + \beta S'$ is a gluing.  
Indeed, it is clear $\gcd(\alpha, \beta) = 1$ since the above generating set for $T'$ is minimal, so we must show $\alpha' \in S' \setminus \{m, n_2', \ldots, n_k'\}$.  However, this follows from Corollary~\ref{c:monoscopicextensiongluing} since $\beta\alpha'$ can be factored using $\beta n_2', \ldots, \beta n_k'$.  
This proves the claim.  

It remains to show that $S'$ lies in the same face of $\mathcal C(\ZZ_m)$ as $S$.  Again, Corollary~\ref{c:monoscopicextensiongluing} implies every element of $\Ap(T';\beta m)$ divisible by $\beta$ can be factored using $\beta n_2', \ldots, \beta n_k'$.  This implies $S$ and $S'$ have identical Kunz posets, thereby completing the proof.  
\end{proof}

\begin{cor}\label{c:monoscopicfaces}
If $S = \<m, n_2, \ldots, n_k\>$ lies in the face $F \subset \mathcal C(\ZZ_{m})$ and $T = \<\alpha\> + \beta S$ is a monoscopic gluing lying in the face $F' \subset \mathcal C(\ZZ_{\beta m})$, then
$$\dim F' = \begin{cases}
\dim F & \text{if } \alpha \in \Ap(S;m); \\
\dim F + 1 & \text{if } \alpha \notin \Ap(S;m).
\end{cases}$$
\end{cor}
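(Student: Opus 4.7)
The plan is to identify $F'$ explicitly in each of the two cases as the image of $F$ (or a one-parameter extension thereof) under the monoscopic embedding $\Phi_\rho$ along $\rho := [\alpha] \in \ZZ_{\beta m}$, and then read off $\dim F'$ from the linear algebra of that identification. The key link is Corollary~\ref{c:monoscopicextensiongluing}: the Kunz poset of $T$ is the monoscopic extension of the Kunz poset of $S$ along $\rho$, and that extension is augmented precisely when $\alpha \in \Ap(S;m)$. Since a face of the group cone is uniquely determined by its Kunz poset by Theorem~\ref{t:groupconefacelattice}, this dichotomy lines up exactly with the two face constructions supplied by Theorems~\ref{t:augmonoscopicfaces} and~\ref{t:monoscopicfaces}.

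If $\alpha \in \Ap(S;m)$, Theorem~\ref{t:augmonoscopicfaces} produces a face of $\mathcal C(\ZZ_{\beta m})$, namely $\Phi_\rho(F)$, whose Kunz poset is the augmented monoscopic extension of the Kunz poset of $F$. By the uniqueness just noted, this face must equal $F'$. Lemma~\ref{l:phimap} says $\Phi_\rho$ is a linear injection, so it preserves affine dimension, giving $\dim F' = \dim \Phi_\rho(F) = \dim F$. If instead $\alpha \notin \Ap(S;m)$, Theorem~\ref{t:monoscopicfaces} produces the face $\RR_{\ge 0}\vec s + \Phi_\rho(F)$, whose Kunz poset is the (non-augmented) monoscopic extension, and again this face is forced to equal $F'$.

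For the dimension count in the second case I would invoke Lemma~\ref{l:betarayind}(b): the beta ray $\vec s$ is linearly independent from the image of $\Phi_\rho$, hence from the linear span of $\Phi_\rho(F)$. Since $\Phi_\rho(F)$ is a nonempty face of the pointed cone $\mathcal C(\ZZ_{\beta m})$ and therefore contains the origin, its affine span coincides with its linear span, so adjoining the ray $\RR_{\ge 0} \vec s$ adds exactly one new dimension and yields $\dim F' = \dim F + 1$.

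I don't anticipate a serious obstacle: all of the substantive work already lives in Section~\ref{sec:monoscopicembeddings}, and what remains is essentially bookkeeping to match the Kunz poset of $T$ with the appropriate monoscopic extension and to translate linear injectivity and linear independence into a dimension statement. The most delicate point, and the one I would write most carefully, is the two-step identification in each case of $F'$ (first as the face singled out by Corollary~\ref{c:monoscopicextensiongluing}, then as the face constructed by Theorem~\ref{t:augmonoscopicfaces} or~\ref{t:monoscopicfaces}) via the uniqueness from Theorem~\ref{t:groupconefacelattice}.
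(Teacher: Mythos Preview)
Your proposal is correct and matches the paper's intent: the corollary is stated there without proof, and the implicit argument is precisely the one you outline, assembling Corollary~\ref{c:monoscopicextensiongluing}, Theorems~\ref{t:augmonoscopicfaces} and~\ref{t:monoscopicfaces}, and Lemmas~\ref{l:phimap} and~\ref{l:betarayind}. The only nitpick is that the uniqueness of a face given its Kunz poset is not literally the content of Theorem~\ref{t:groupconefacelattice} as stated, but it is immediate from the fact that the Kunz poset (together with its ground set $G/H'$) records exactly which facet equalities hold, so your invocation is harmless.
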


% \begin{example}
% \tochris{more general gluings}
% \end{example}

%%%%%%%%%%%%%%%%%%%%%%%%%%%%%%%%%%%%%%%%%%%%%%%%%%%%%%%%%%%%%%%%%%%%%%%%%
%%%%%%%%%%%%%%%%%%%%%%%%%%%%%%%%%%%%%%%%%%%%%%%%%%%%
%%%%%%%%%%%%%%%%%%%%%%%%%%%%%%%%%%%%%%%%%%%%%%%%%%%%%%%%%%%%%%%%%%%%%%%%%

%%%%%%%%%%%%%%%%%%%%%%%%%%%%%%%%%%%%%%%%%%%%%%%%%%%%%%%%%%%%%%%%%%%%%%%%%
\end{document}